\newtheorem{theorem}{Theorem}[section]
\newtheorem{lemma}[theorem]{Lemma}
\newtheorem{prop}[theorem]{Proposition}
\newtheorem{cor}[theorem]{Corollary}
\theoremstyle{definition}
\theoremstyle{remark}
\newtheorem{remark}[theorem]{Remark}
\numberwithin{equation}{section}
\newcommand{\rd}{{\mathbb R^d}}
\newcommand{\rr}{{\mathbb R}}
\newcommand{\integer}{{\mathbb Z}}
\newcommand{\N}{{\mathbb N}}
\newcommand{\QQd}{\mathbb{Q}(d)}
\renewcommand{\d}{\operatorname{d}}
\newcommand{\p}{\mathcal{P}}
\renewcommand{\P}{\mathbb{P}}
\newcommand{\lex}{<_{\text{lex}}}
\renewcommand{\P}{\mathbb{P}}
\newcommand{\E}{\mathbb{E}}
\renewcommand{\d}{\operatorname{d}}
\newenvironment{proofof}[1]{\vskip 2mm\noindent {\textit{Proof of {#1}~}}}
                    {\hfill $\square$ \vskip 2mm \noindent}
\newcolumntype{C}[1]{>{\centering\arraybackslash}m{#1}}
\begin{document}
\sloppy
\title{The longest edge in discrete and continuous long-range percolation }

\author{Arnaud Rousselle}
\address{Arnaud Rousselle, Institut de Mathématiques de Bourgogne, UMR 5584, CNRS, Université de Bourgogne, F-21000 Dijon, France}
\email{arnaud.rousselle@u-bourgogne.fr}

\author{Ercan S\"onmez}
\address{Ercan S\"onmez, Faculty of Mathematics, Bochum, Germany}
\email{ercan.soenmez\@@{}rub.de}

\begin{abstract}
We consider the random connection model in which an edge between two Poisson points at distance $r$ is present with probability $g(r)$. We conduct an extreme value analysis on this model, namely by investigating the longest edge with at least one endpoint within some finite observation window, as the volume of this window tends to infinity. We show that the length of the latter, after normalizing by some appropriate centering and scaling sequences, asymptotically behaves like one of each of the three extreme value distributions, depending on choices of the probability $g(r)$. We prove our results by giving a formal construction of the model by means of a marked Poisson point process and a Poisson coupling argument adapted to this construction. In addition, we study a discrete variant of the model. We obtain parameter regimes with varying behavior in our findings and an unexpected singularity. 
\end{abstract}

\keywords{Random graphs, extreme value theory, long-range percolation, maximum edge length, Poisson approximation}
\subjclass{Primary: 05C80, 60G70; Secondary: 60F05, 05C82, 82B20, 82B21.}

\maketitle

\baselineskip=18pt
\sloppy

\section{Introduction}

Random graphs are used as models for networks, specifically capturing their complexity by describing local and probabilistic rules according to which vertices are present and connected to each other. Long-range percolation is a very popular random graph, studied extensively for more than several decades now. A classical example is long-range percolation on the lattice, in which the vertex set is $\integer^d$ and an edge between an arbitrary pair of vertices $x,y \in \integer^d$ is present with probability $p(x,y)$ depending only on their Euclidean distance, independently of all the other edges. We refer to this model as discrete long-range percolation in this paper. A continuous counterpart is given by the random connection model, with the crucial difference of the vertex set being derived from the canonical projection of a stationary Poisson point process. We call such a model continuous long-range percolation in this paper. 

Since its introduction in \cite{schulman} discrete long-range percolation has been the subject of a considerably large number of works, see \cite{Benber, berger1, Biskup, CGS, CS1, CS2, trapman} and references therein, just to mention a few. Significant amount of these works focuses specifically on graph distances and the behavior of the random walk. One of the first works studying continuous long-range percolation \cite{Penrose1} addresses its percolation behavior and the study of cluster sizes. Ever since this model has also received great attention, see \cite{burton, devroye, iyer, last2, last1, Penrose2, s2,s1, van} for example. Recent works focus on central limit theorems for several graph structures in continuous long-range percolation such as component counts. More recently, we also encounter properties of the random walk \cite{s1} and the graph distances \cite{s2} on this graph.

In this work we study a problem jointly for both models from another perspective and elucidate yet unrevealed properties long-range percolation exhibits. Given the construction of the models it is an eligible and natural question of interest how long an edge can possibly be. A precise mathematical formulation of this problem is given in Section 2. The nature of this question falls within the scope of extreme value theory, which is mainly concerned with max-stable random elements occurring as limits of normalized maximums. 

Most of the works on long-range percolation assume that the probability of connecting two vertices has a polynomial decay in their distances. A little number of early works (see \cite{hara, meester}) includes exponentially decaying probabilities. In this paper we consider three classes of functions for the probability of an edge to be present, depending on the Euclidean distance between the corresponding vertices. The first class  is the widely-studied class of functions with polynomial decay. The second class is the class of functions with exponential decay. Finally, the third class is a class of functions under which the lengths of the edges are bounded by some positive and finite constant (a finite right endpoint) and have a power law behavior at this constant, see Section 2 below.

The purpose of this paper is to determine the asymptotic behavior of the maximum length of edges with at least one endpoint within some finite observation window, as the volume of this window tends to infinity. We discover that the aforementioned maximum, after normalizing by some appropriate centering and scaling constants, asymptotically behaves like an extreme value distribution. More precisely, depending on the choice of the probability functions mentioned above, we recover the Fr\'echet, Gumbel and Weibull distribution in the limit.

In the case of polynomially decaying probabilities the edge connecting probability depends on an exponent $\alpha$, which is assumed to be larger than the underlying dimension $d$. In this setting, substantial works on graph-theoretical properties have revealed phase transitions with typical behavior according to the values of $\alpha$, see \cite{Biskup} for details. Surprisingly, in our investigations we encounter a phase transition with solely two phases and a singularity with unexpected behavior in case $\alpha=2d$, see Theorem \ref{thd2} below for details. Additionally, we show that this phenomenon does not occur if one considers directed edges, see Theorem \ref{prop:MaxEdgedLRPq}. We again remark that the vertex set of discrete long-range percolation is $\integer^d$, whereas the vertex set of the random connection model can be an arbitrary countable subset of $\rd$. Also, a mentionable difference between the discrete and continuous model is that obtaining a Weibull limit fails in the discrete case, see Remark \ref{rwd}. 


Our results for discrete and continuous long-range percolation can be considered to be analogous counterparts. Nevertheless, the results and methods presented here in obtaining them exhibit subtle, but crucial differences. In order to establish our Theorems \ref{thd1} and \ref{thd2} for the discrete model we make use of the law of small numbers, see \cite{ross}. Our Theorems \ref{th:MaxEdgeLRPc} and \ref{thcnew} for the continuous model are accomplished by employing a general result on Poisson approximation \cite[Theorem 3.1]{Penrose2}, which is related to Stein's method, see \cite{barbour, lindvall} for example. Such a result in the case that the underlying Poisson point process is finite has been used in \cite{Penrose2} to study central limit theorems for component counts in some random connection models derived from finite Poisson point processes, but is also available if the underlying Poisson point process has $\sigma$-finite but infinite intensity measure. The latter version of this Poisson approximation Theorem turns out to be of use for our purposes. More particularly, it enables us to obtain convergence of the sum of exceedances of normalized edge lengths towards a Poisson distribution in terms of the total variation and Wasserstein distance, additionally gives us an insight on the rates regarding the convergence speed. The application of the abstract Poisson convergence result requires a formal way of constructing the random connection model by means of a marked point process, which we give in the proof of our result in the continuous counterpart.

We remark that long edges have also been studied for the minimal spanning tree derived from (finite) Poisson point processes in \cite{penrose3} under a different framework. Since there is no additional randomness in the presence of the edges, there the construction of the random graph is completely determined by the position of vertices and the proof makes use of the Chen-Stein method \cite{arr}. {One of the important challenges in this paper is that the construction is not local and that there is an additional randomness in the construction of the edge set itself. This will, in particular, lead us to introduce suitable couplings in both discrete and continuous cases and to provide a construction of the continuous model from marked Poisson point processes (see Subsection \ref{ssec:construction}).}

We close the Introduction with a brief description of the structure of this paper. In Section 2, after clarifying the models under consideration, we rigorously formulate the problem to be investigated and we state the main assertions of this paper. Finally, we split the proofs for the discrete model and the continuous counterpart in Sections 3 and 4, respectively.

\section{Behavior of the longest edge}

\subsection{Undirected edges}
Consider two models of long-range percolation on the $d$-dimensional Euclidean space, $d \in \N$, defined as a graph $(V,E)$. For discrete long-range percolation, the vertex set is given by $V=V^{\text{dis}}=\integer^d$ while for continuous long-range percolation, it is given by $V=V^{\text{cont}}=\mathcal{P}$ where $\mathcal{P}=\{X_n: n \in \N\}$ is a realization of a homogeneous Poisson point process with unit intensity in $\rd$, see \cite{SW, last3} for example. Given the set of vertices $V$ we construct the set of undirected edges as follows. Let $g\colon \rd \to [0,1]$ be a measurable function satisfying $g(x) = g(-x)$ for all $x\in\rd$ and
\begin{equation} \label{g1}
 0< \int_\rd g(x) dx < \infty.
\end{equation}
For every $x,y \in V$ the (undirected) edge connecting $x$ and $y$ is present with probability $g(x-y)$, independently of all the other edges. We will use the notation $x\leftrightarrow y$ if there is an (undirected) edge $\{x,y\} \in E$. We refer to Section 4 for a formal construction in case the vertex set is derived from the homogeneous Poisson point process. In the sequel we denote by $\mathbb{P}$ the corresponding probability measure and by $\mathbb{E}$ the expectation with respect to this measure. We now fix $\|\cdot\|$ a norm on $\rd$. If $V=V^{\text{dis}}$ then we assume that $\|\cdot\|$ is the 1-norm. If $V=V^{\text{cont}}$ we assume that $\|\cdot\|$ is the classical Euclidean norm, that is the 2-norm. For $n\in\N$, let $B_n = [-n,n]^d$ denote an observation window. We then consider the length of the longest edge with at least one endpoint in $B_n$:
\begin{align}\label{m1}
 e_n^* &:= \max_{ \{x,y\} \in E_{B_n}} \|x-y\|, \qquad E_{B_n}:= \Big\{ \{x,y\} \in E: x \in B_n \Big\}.
\end{align}
Condition \eqref{g1} ensures that the random graph $(V,E)$ is almost surely locally finite. In particular this ensures that the maximum in \eqref{m1} is well-defined. We are mainly interested in the following types of functions satisfying \eqref{g1}. Our strategy can be adapted to slightly more general functions $g$, but we focus our attention to the following particular choices of functions.
\begin{itemize}
	\item[(F)] Fix a parameter $\alpha \in (d,\infty)$. We suppose that $g$ is a function satisfying
$$ \lim_{\|z\| \to \infty} \frac{g(z)}{\|z\|^{-\alpha}} = 1.$$
	\item[(G)] Fix parameters $\lambda, \alpha \in (0,\infty)$ and suppose that $g$ satisfies
$$\lim_{\|z\| \to \infty} \frac{g(z)}{\exp(-\lambda\|z\|^{\alpha})} =  1.$$
	\item[(W)] Fix parameters $M, \alpha \in (0,\infty)$ and let
 $$g(z)= M^{-\alpha}(M-{\|z\|})^{\alpha} \mathbf{1}_{\{ \|z\| \leq M \}}, \quad z \in \rd.$$
\end{itemize}
Recall that the cumulative distribution function of a Fr\'echet random variable with parameter $\beta\in (0,\infty)$ is given by $\Phi_\beta (r) = \exp(-r^{-\beta}) \mathbf{1}_{(0, \infty)} (r)$, $r \in \rr$, that the cumulative distribution function of a Gumbel random variable is given by $\Lambda(r) = \exp (-e^{-r})$, $r\in\rr$, and that the one of a Weibull random variable with parameter $\gamma \in (0, \infty)$ is given by $\Psi_\gamma (r) = \exp(-(-r)^\gamma)\mathbf{1}_{(-\infty, 0]}(r) + \mathbf{1}_{(0, \infty)} (r)$, $r \in\rr$.

Next we define a sum of exceedances, a natural setting for the study of extreme values and then we show that this sum has a Poisson limit. Now we make this discussion more precise. Given $r\in\rr$ and sequences $(b_n)_{n \in\N}$, $(c_n)_{n \in\N}$, let $r_n = c_nr + b_n$, $n\in\N$. The number of exceedances is defined as a random variable $W(n,r)$, $n\in\N$, $r\in\rr$, which will be given by
\begin{align}\label{def:NbExceed}
\begin{split}
W(n,r)&=  \sum_{x\in V \cap B_n} \sum_{y\in V \cap B^C_n} \mathbf{1}_{\{ \|x-y\| \mathbf{1}_{\{x\leftrightarrow y\}} >r_n \}} + \frac12 \sum_{x\in V \cap B_n} \sum_{y\in V \cap B_n} \mathbf{1}_{\{ \|x-y\| \mathbf{1}_{\{x\leftrightarrow y\}} >r_n \}}\\
& = \sum_{x\in V \cap B_n} \sum_{y\in V } \mathbf{1}_{\{ \|x-y\| \mathbf{1}_{\{x\leftrightarrow y\}} >r_n \}} - \frac12 \sum_{x\in V \cap B_n} \sum_{y\in V \cap B_n} \mathbf{1}_{\{ \|x-y\| \mathbf{1}_{\{x\leftrightarrow y\}} >r_n \}}.
\end{split}
\end{align}

Note that the correction term in \eqref{def:NbExceed} in this representation arises due to the fact that an exceedance with both endpoints in the observation window $B_n$ is counted twice in the number of vertices that are an endpoint of a long edge. A crucial observation is that the correction term disappears if $r_n \geq 2dn$ in the discrete case and $r_n \geq 2\sqrt{d}n$ in the continuous case, which will be the case in some of the regimes considered below.

{In our main results, we will provide choices of the sequences $(b_n)_{n \in\N}$ and $(c_n)_{n \in\N}$, according to the connection function $g$, under which the following convergence takes place:
\begin{equation}\label{eq:cvWn}
    W(n,r)  \xrightarrow[n\to \infty]{d} W (r),
\end{equation}
where $W(r)$ denotes a Poisson random variable with mean depending on $r$.
}

In the rest of the paper, we denote by $\omega_d$  the volume of the $d$-dimensional unit ball, with respect to the 1-norm in the discrete case and the 2-norm in the continuous case.

We begin with the discrete case. First, we state the following result, which only includes the connection function in case (F) for $\alpha \in (d,2d)$ and partly $\alpha =2d$. Recall that $\|\cdot\|$ is the 1-norm, which is supposed throughout whenever we consider the discrete model. Recall also that $W(n,r)$ is the number of exeedances and thus, $\{c_n^{-1}(e^*_n-b_n)\leq r\}=\{W(n,r)=0\}$. 

\begin{theorem}\label{thd1} Let $V=V^{\text{dis}}$. Assume that the function $g$ is given by \textnormal{(F)} with the parameter $\alpha \in(d,\infty)$, let ${c}_n= ((\alpha-d)2^{-d}d^{-1}\omega_d^{-1})^\frac{1}{d-\alpha}n^\frac{d}{\alpha-d}$ and $b_n = 0$, $n\in \N$.
\begin{enumerate}

\item \label{d11} If $\alpha \in(d,2d)$  then, for every $r\geq 0$, \eqref{eq:cvWn} holds with $\mathbb{E}[W(r)] = r^{d-\alpha}$. In particular,
$$ c_n^{-1} e_n^*  \xrightarrow[n\to \infty]{d} \Phi_{\alpha-d},$$
where $\Phi_{\alpha-d}$ has a Fr\'echet distribution with parameter $\alpha-d$.
\item\label{d12} If $\alpha =2d$ then, for every $r\geq 2d ((\alpha-d)2^{-d}d^{-1}\omega_d^{-1})^\frac{1}{\alpha-d}$, \eqref{eq:cvWn} holds with $\mathbb{E}[W(r)] = r^{d-\alpha}$.
\end{enumerate}
\end{theorem}

Theorem \ref{thd1} is accomplished by an application of the law of small numbers, see \cite[Theorem 4.6]{ross}. One may wonder why the assertion in case $\alpha =2d$ only holds if we assume $r\geq 2d ((\alpha-d)2^{-d}d^{-1}\omega_d^{-1})^\frac{1}{\alpha-d}$. For the other cases, namely for example if $\alpha = 2d$ and $r \in (0,2d ((\alpha-d)2^{-d}d^{-1}\omega_d^{-1})^\frac{1}{\alpha-d})$, or if $\alpha > 2d$ the correction term in \eqref{def:NbExceed} has a non-trivial contribution to the number of exceedances, resulting in the fact that the dependencies become too strong and the application of the law of small numbers becomes intractable. At this point it is an open and interesting question to find out whether we can complete the other cases as well. At least some progress can be made in dimension $d=1$ and we can definitely answer the question that in case $d=1$ and $\alpha=2$ the parameter of the Poisson limit, to our big surprise, changes if $r < 2d ((\alpha-d)2^{-d}d^{-1}\omega_d^{-1})^\frac{1}{\alpha-d}=\frac{1}{2}$. Indeed, our next main Theorem reads as follows.

\begin{theorem}\label{thd2} Let $V=V^{\text{dis}}$. Suppose that $d=1$. 
\begin{enumerate}
\item\label{d22} If the function $g$ is given by \textnormal{(F)} with parameter $\alpha=2$, let ${c}_n=4n$ and $b_n=0$, $n\in\N$. Then, for every {$r\geq \frac12$} \eqref{eq:cvWn} holds with $\mathbb{E}[W(r)] =r^{-1}$, and for every $r\in (0,\frac12)$, \eqref{eq:cvWn} holds with {$\mathbb{E}[W(r)] =\frac{1+2r}{2r} - \log (2r)$}. In particular,
$$ c_n^{-1} e_n^*  \xrightarrow[n\to \infty]{d} Z^*,$$
where $Z^*$ is a random variable with heavy-tailed distribution satisfying
\[\mathbb{P} (Z^* \leq r)  = \exp \left( -\frac{1+2r}{2r} + \log (2r) \right)\mathbf{1}_{(0,\frac12)}(r) +\exp \left( -r^{-1} \right)  \mathbf{1}_{[\frac12,\infty)}(r)\]
for every $r\in (0, \infty) $.
\item\label{d21} If the function $g$ is given by \textnormal{(F)} with the parameter {$\alpha>2$}, let ${c}_n= ((\alpha-1)/4)^\frac{1}{1-\alpha}n^\frac{1}{\alpha-1}$, $b_n = 0$, $n\in\N$. Then, for every $r\geq 0$, \eqref{eq:cvWn} holds with $\mathbb{E}[W(r)] = r^{1-\alpha}$. In particular,
$$ c_n^{-1} e_n^*  \xrightarrow[n\to \infty]{d} \Phi_{\alpha-1},$$
where $\Phi_{\alpha-1}$ has a Fr\'echet distribution with parameter $\alpha-1$.
\item\label{dthg2}  If the function $g$ is given by \textnormal{(G)} with parameters $\lambda,\alpha \in (0, \infty)$, define $K=K_{\alpha}= 4\Gamma(\frac{1}{\alpha})\alpha^{-1}\lambda^{-\frac{1}{\alpha}}$. Further define $$b_n=\lambda^{-\frac{1}{\alpha}}\left(\ln n+\left(\frac{1}{\alpha}-1\right)\ln\left(\ln n +\ln K\right)+\ln\left(K\Gamma\left(\frac{1}{\alpha}\right)^{-1}\right)\right)^\frac{1}{\alpha}$$ and $c_n =\alpha^{-1}\lambda^{-1}b_n^{1-\alpha}$ for every $n\in\N$. Then, for every $r\in\rr$, \eqref{eq:cvWn} holds with $\mathbb{E}[W] = e^{-r}$. In particular,
$$ c_n^{-1} (e_n^* - b_n)  \xrightarrow[n\to \infty]{d} \Lambda,$$
where $\Lambda$ has a Gumbel distribution. 
\end{enumerate}
\end{theorem}

{The biggest surprise is the fact that the correction term only has a substantial impact in the case $\alpha=2$ for the Fr\'echet case, but not if $\alpha>2$, which we also conjecture to hold for arbitrary dimension $d \geq 2$.}

We now turn to the continuous case. 
{\begin{theorem}\label{th:MaxEdgeLRPc} Let $V=V^{\text{cont}}$. Assume that the function $g$ is given by \textnormal{(F)} with the parameter $\alpha \in(d,\infty)$, let ${c}_n= ((\alpha-d)2^{-d}d^{-1}\omega_d^{-1})^\frac{1}{d-\alpha}n^\frac{d}{\alpha-d}$ and $b_n = 0$, $n\in \N$.
\begin{enumerate}
\item\label{cthf}
 Assume that $\alpha\in(d,2d)$. Then, for every $r\geq 0$, \eqref{eq:cvWn} holds with $\mathbb{E}[W(r)] = r^{d-\alpha}$. In particular,
$$ c_n^{-1} e_n^*  \xrightarrow[n\to \infty]{d} \Phi_{\alpha-d},$$
where $\Phi_{\alpha-d}$ has a Fr\'echet distribution with parameter $\alpha-d$.
\item\label{cthfnew1} If $\alpha =2d$ then, for every $r\geq 2\sqrt{d} ((\alpha-d)2^{-d}d^{-1}\omega_d^{-1})^\frac{1}{\alpha-d}$, \eqref{eq:cvWn} holds with $\mathbb{E}[W(r)] = r^{d-\alpha}$.
\end{enumerate}
\end{theorem}}

We notice that the content in Theorem \ref{th:MaxEdgeLRPc} mainly matches the one of Theorem \ref{thd1} in the discrete case. There is just one subtle difference regarding the case $\alpha=2d$, as in part \eqref{cthfnew1} we have a slightly different lower bound on $r$, which is due to the choice of different norms.

As in Theorem \ref{thd2} we continue the results in the continuous case by obtaining the same singularity for the case $\alpha=2d$.

\begin{theorem}\label{thcnew} Let $V=V^{\text{cont}}$. Suppose that $d=1$. 
\begin{enumerate}
\item\label{d22n} If the function $g$ is given by \textnormal{(F)} with parameter $\alpha=2$, let ${c}_n=4n$ and $b_n=0$, $n\in\N$. Then, for every {$r\geq \frac12$} \eqref{eq:cvWn} holds with $\mathbb{E}[W(r)] =r^{-1}$, and for every $r\in (0,\frac12)$, \eqref{eq:cvWn} holds with {$\mathbb{E}[W(r)] =\frac{1+2r}{2r} - \log (2r)$}. In particular,
$$ c_n^{-1} e_n^*  \xrightarrow[n\to \infty]{d} Z^*,$$
where $Z^*$ is a random variable with heavy-tailed distribution satisfying
\[\mathbb{P} (Z^* \leq r)  = \exp \left( -\frac{1+2r}{2r} + \log (2r) \right)\mathbf{1}_{(0,\frac12)}(r) +\exp \left( -r^{-1} \right)  \mathbf{1}_{[\frac12,\infty)}(r)\]
for every $r\in (0, \infty) $.
\item If the function $g$ is given by \textnormal{(F)} with the parameter {$\alpha>2$}, let ${c}_n= ((\alpha-1)/2)^\frac{1}{1-\alpha}n^\frac{1}{\alpha-1}$, $b_n = 0$, $n\in\N$. Then, for every $r\geq 0$, \eqref{eq:cvWn} holds with $\mathbb{E}[W(r)] = r^{1-\alpha}$. In particular,
$$ c_n^{-1} e_n^*  \xrightarrow[n\to \infty]{d} \Phi_{\alpha-1},$$
where $\Phi_{\alpha-1}$ has a Fr\'echet distribution with parameter $\alpha-1$.
\item\label{cthg2}
  If the function $g$ is given by \textnormal{(G)} with parameters $\lambda,\alpha \in (0, \infty)$ define $K=K_{ \alpha}= 4\Gamma(\frac{1}{\alpha})\alpha^{-1}\lambda^{-\frac{1}{\alpha}}$. Further define 
  \begin{equation} \label{gumbelbn}
      b_n=\lambda^{-\frac{1}{\alpha}}\left(\ln n+\left(\frac{1}{\alpha}-1\right)\ln\left(\ln n +\ln K\right)+\ln\left(K\Gamma\left(\frac{1}{\alpha}\right)^{-1}\right)\right)^\frac{1}{\alpha}
  \end{equation}
  and $c_n =\alpha^{-1}\lambda^{-1}b_n^{1-\alpha}$ for every $n\in\N$. Then, for every $r\in\rr$, \eqref{eq:cvWn} holds with $\mathbb{E}[W(r)] = e^{-r}$. In particular,
$$ c_n^{-1} (e_n^* - b_n)  \xrightarrow[n\to \infty]{d} \Lambda,$$
where $\Lambda$ has a (standard) Gumbel distribution.
\item\label{cthw}
 If the function $g$ is given by \textnormal{(W)} with parameters $M,\alpha \in (0, \infty)$, let ${c}_n = Kn^{-\frac{1}{\alpha+1}}$,  where $K=K_{  M,\alpha}=(M^\alpha(\alpha+1)2^{-2})^\frac{1}{\alpha+1}$, and let $b_n =M$, for every $n\in\N$. Then, for every $r\leq 0$, \eqref{eq:cvWn} holds with $\mathbb{E}[W(r)] = (-r)^{\alpha+1}$. In particular,
$$ c_n^{-1} (e_n^* - M)  \xrightarrow[n\to \infty]{d} \Psi_{\alpha+1},$$
where $\Psi_{\alpha+1}$ has a Weibull distribution with parameter $\alpha+1$.
\end{enumerate}
\end{theorem}


The Poisson limits in Theorem \ref{th:MaxEdgeLRPc} and \ref{thcnew} are achieved by proving an upper bound on both the total variation and Wasserstein distance between the sum of exceedances $W(n,r)$ and its Poisson limit $W(r)$. For example, in the Fr\'echet case we also see from the proof in Section 4 that both in the total variation and Wasserstein distance the convergence is of order $O(n^{-d})$, as $n\to \infty$. Again we note that there is just one subtle difference regarding the case $\alpha>2$ in Theorem \ref{thcnew}, as we have a slightly different constant in part (2) compared to the discrete case.


{A short summary of the contents in this subsection is provided in Table \ref{tab:disc}.}

\begin{table}[ht]
{
    \begin{center}
\begin{tabular}{c|c|c}
Case &  Value of $\mathbb{E}[W(r)]$ & Limit result\\
\hline
(F), $d \in \N$, $\alpha\in(d,2d)$ & $r^{d-\alpha}$ &   $c_n^{-1} e_n^*  \xrightarrow[n\to \infty]{d} \Phi_{\alpha-d}$\\
\hline
(F), $d =1$, $\alpha=2$ & $ \left\{
\begin{array}{ll}
\frac{1+2r}{2r} - \log (2r) , &  r\in (0,\frac12) \\
r^{-1}, & r \in [\frac12, \infty) \
\end{array}
\right. $ &   $c_n^{-1} e_n^*  \xrightarrow[n\to \infty]{d} Z^*$\\
\hline
(F), $d =1$, $\alpha\in(2, \infty)$ & $r^{1-\alpha}$ &   $c_n^{-1} e_n^*  \xrightarrow[n\to \infty]{d} \Phi_{\alpha-1}$\\
\hline
{(G)}, $d=1$ & $e^{-r}$ &  $ c_n^{-1} (e_n^* - b_n)  \xrightarrow[n\to \infty]{d} \Lambda$\\
\hline
(W), $d=1$, $V=V^{cont}$ & $(-r)^{\alpha+1}$ &   $c_n^{-1} (e_n^* - M)  \xrightarrow[n\to \infty]{d} \Psi_{\alpha+1}$\\
\end{tabular}
\end{center} 
\caption{Summary of the main results for undirected edges.
}
    \label{tab:disc}
}
\end{table}

\subsection{Directed edges}
We now investigate related oriented models, in which we consider directed edges of long-range percolation. Our findings seem worth formulating for us. Indeed, for such edges we obtain a complete picture, as we are able to handle arbitrary dimension. Moreover, we find that there is no singularity anymore in the critical case $\alpha=2d$, so that adding directions changes the behavior of the longest edge. Furthermore, as a by-product we get an additional insight into the behavior of undirected edges, see Corollary \ref{th:MaxEdgeLRPd1} below.

We consider the following oriented long-range percolation models. 
\medskip

\noindent\underline{Directed Long-Range Percolation (dLRP):} We call \emph{directed long-range percolation model} the random graph with vertex set $V$ in which an oriented edge $(u,v)$ is independently drawn from $u$ to $v$ with probability $g(v-u)$. We write $x \rightarrow y$ if there is an oriented edge from $x$ to $y$. Similarly, we define the maximum edge-length as
\begin{align*}
 e_n^*(\text{dLRP}) &:=\max_{x\in V\cap B_n} \max_{y\in V} \|x-y\| \mathbf{1}_{\{x \rightarrow y\}}.
\end{align*} Define the random variable $W^\rightarrow(n,r)$, $n\in \N$, as the sum of exceedances, which is now given by
\begin{align} \label{direxc}
    W^\rightarrow(n,r) = \sum_{x \in V\cap B_n} \sum_{y \in V}  \mathbf{1}_{\{x \rightarrow y, \|x-y\|>r_n \}}.
\end{align}

In contrary to \eqref{def:NbExceed}, there is no correction term anymore, which makes the study of exceedances easier. For example, in the discrete case, the random variables
\[e_{x_i}^*(\text{dLRP})=\max_{y\in \integer^d} \|x_i-y\| \mathbf{1}_{\{x_i \rightarrow y\}}\]
are independent by default so that the law of
\begin{align*}
 e_n^*(\text{dLRP}) &:=\max_{x_i\in B_n}e_{x_i}^*(\text{dLRP})
\end{align*}
can be determined explicitly according to $g$. {In this setting, the number of exceedances follows a binomial distribution whose parameters can be easily determined according to $n$, $d$ and $g$. This is a notable difference from undirected models in which long-range dependencies occur due to the non-local construction.} 

Let us observe that (discrete and continuous) long-range percolation and dLRP can be coupled in a straightforward way so that  $e_n^*\leq  e_n^*(\text{dLRP})$ almost surely.

In order to deduce further properties on the behavior of the undirected edges, see Corollary \ref{th:MaxEdgeLRPd1} below, we also consider the following variant of directed models.

\medskip
\noindent\underline{Long-Range Percolation in a quadrant (dLRPq):} Let us define the quadrant $\QQd$ as
\[\QQd:=\left\{x=(x_1,\dots, x_d)\in\mathbb{R}^d:\, x_1,\dots, x_{d-1}\geq 0, x_d > 0 \right\}.\] We call \emph{directed long-range percolation in the quadrant model} the random graph with vertex set $V$ in which an oriented edge $(u,v)$ is independently drawn from $u\in V$ to $v\in u+ \QQd$ with probability $g(v-u)$, $u,v \in V$. We write $x \underset{q}{\rightarrow} y$ if there is an oriented edge from $x$ to $y$ in dLRPq. Once again, we define the maximum edge-length
\begin{align*}
 e_n^*(\text{dLRPq}) &:=\max_{x\in V\cap B_n} \max_{y\in V\cap (x+\QQd)} \|x-y\| \mathbf{1}_{\{x \underset{q}{\rightarrow} y\}}.
\end{align*}
and the number of exceedances $W^{\underset{q}{\rightarrow}} (n,r)$, $n\in \N$, as in \eqref{direxc}, where we replace the expressions $\mathbf{1}_{\{x \rightarrow y\}}$ by $\mathbf{1}_{\{x \underset{q}{\rightarrow} y\}}$.
Moreover, (discrete and continuous) dLRPq and long-range percolation can also be coupled so that  $e_n^*(\text{dLRPq})\leq e_n^*$ almost surely.

\begin{theorem}\label{prop:MaxEdgedLRPq} Let $V=V^{dis}$. 
\begin{enumerate}
\item\label{d22m} If the function $g$ is given by \textnormal{(F)} with parameter $\alpha \in(d,\infty)$, let ${c}_n= ((\alpha-d)2^{-d}d^{-1}v_d^{-1})^\frac{1}{d-\alpha}n^\frac{d}{\alpha-d}$ and $b_n = 0$, $n\in \N$, where $v_d$ is a positive constant only depending on $d$. Then, for every $r\geq 0$, \eqref{eq:cvWn} when $W(n,r)$ is replaced by $W^\rightarrow(n,r)$ holds with $\mathbb{E}[W(r)] = r^{d-\alpha}$. In particular,
$$ c_n^{-1} e_n^*(\text{dLRP})  \xrightarrow[n\to \infty]{d} \Phi_{\alpha-d},$$
where $\Phi_{\alpha-d}$ has a Fr\'echet distribution with parameter $\alpha-d$.
\item\label{dirthg2}
  If the function $g$ is given by \textnormal{(G)} with parameters $\lambda,\alpha \in (0, \infty)$ define $K=K_{d, \alpha}= d2^d v_d\Gamma(\frac{d}{\alpha})\alpha^{-1}\lambda^{-\frac{d}{\alpha}}$. Further define 
  \begin{equation} 
      b_n=\lambda^{-\frac{1}{\alpha}}\left(d\ln n+\left(\frac{d}{\alpha}-1\right)\ln\left(d\ln n +\ln K\right)+\ln\left(K\Gamma\left(\frac{d}{\alpha}\right)^{-1}\right)\right)^\frac{1}{\alpha}
  \end{equation}
  and $c_n =\alpha^{-1}\lambda^{-1}b_n^{1-\alpha}$ for every $n\in\N$. Then, for every $r\in\rr$, \eqref{eq:cvWn} when $W(n,r)$ is replaced by $W^\rightarrow(n,r)$ holds with $\mathbb{E}[W(r)] = e^{-r}$. In particular,
$$ c_n^{-1} (e_n^*(\text{dLRP}) - b_n)  \xrightarrow[n\to \infty]{d} \Lambda,$$
where $\Lambda$ has a (standard) Gumbel distribution.
\end{enumerate}
Moreover, the same assertions hold with $W^\rightarrow(n,r)$ replaced by $W^{\underset{q}{\rightarrow}} (n,r)$, $e_n^*(\text{dLRP})$ replaced by $e_n^*(\text{dLRPq})$ and $v_d$ in part \eqref{d22m} and \eqref{dirthg2} is replaced by another suitably chosen constant.
\end{theorem}

Next, we state the corresponding results of Theorem \ref{prop:MaxEdgedLRPq} in the continuous model, which additionally includes the Weibull case (W).

\begin{theorem}\label{prop:MaxEdgedLRPq2} Let $V=V^{cont}$. Then the statements in Theorem \ref{prop:MaxEdgedLRPq} hold as well. In addition, if
the function $g$ is given by \textnormal{(W)} with parameters $M,\alpha \in (0, \infty)$, let ${c}_n = Kn^{-\frac{d}{\alpha+1}}$,  where $K=K_{ d, M,\alpha}=(M^\alpha(\alpha+1)2^{-d}d^{-1} v_d^{-1})^\frac{1}{\alpha+1}$, and let $b_n =M$, for every $n\in\N$. Then, for every $r\leq 0$, \eqref{eq:cvWn} when $W(n,r)$ is replaced by $W^\rightarrow(n,r)$ holds with $\mathbb{E}[W(r)] = (-r)^{\alpha+1}$. In particular,
$$ c_n^{-1} (e_n^*(\text{dLRP}) - M)  \xrightarrow[n\to \infty]{d} \Psi_{\alpha+1},$$
where $\Psi_{\alpha+1}$ has a Weibull distribution with parameter $\alpha+1$. Moreover, the assertion also holds  with $W^\rightarrow(n,r)$ replaced by $W^{\underset{q}{\rightarrow}} (n,r)$, $e_n^*(\text{dLRP})$ replaced by $e_n^*(\text{dLRPq})$ and $v_d$ replaced by another suitably chosen constant.
\end{theorem}

{A short summary of the contents is provided in Table \ref{tab:cont} for the reader's convenience. 
\begin{table}[ht]
    \begin{center}
\begin{tabular}{c|c|c|c}
Case & Norming constants & Value of $\mathbb{E}[W(r)]$ & Limit result\\
\hline
(F)& $c_n =K n^\frac{d}{\alpha-d}$, $b_n=0$  & $r^{d-\alpha}$ &   $c_n^{-1} e_n^*  \xrightarrow[n\to \infty]{d} \Phi_{\alpha-d}$\\
\hline
\multirow{2}{*}{(G)}& $c_n =\alpha^{-1}\lambda^{-1}b_n^{1-\alpha}$, & \multirow{2}{*}{$e^{-r}$} &   \multirow{2}{*}{$ c_n^{-1} (e_n^* - b_n)  \xrightarrow[n\to \infty]{d} \Lambda$}\\
&$b_n$: see \eqref{gumbelbn}  &&\\
\hline
(W), $V=V^{cont}$& $c_n =K  n^{-\frac{d}{\alpha+1}}$, $b_n=M$  & $(-r)^{\alpha+1}$ &   $c_n^{-1} (e_n^* - M)  \xrightarrow[n\to \infty]{d} \Psi_{\alpha+1}$\\
\end{tabular}
\end{center} 
\caption{Summary of the results for directed edges.}
\label{tab:cont}
\end{table}
}

Now Corollary \ref{th:MaxEdgeLRPd1} is an immediate consequence of the couplings between the three models.

\begin{cor}\label{th:MaxEdgeLRPd1} Let $V=V^{\text{dis}}$ or $V=V^{\text{cont}}$. The following assertions hold.
\begin{enumerate}

\item If the function $g$ is given by \textnormal{(F)} with the parameter $\alpha\in(d,\infty)$ then there exist $0<\kappa_1,\kappa_1'<\infty$ such that for $c_n=n^\frac{d}{\alpha-d}$, $n\in\N$, for every $r\in (0, \infty)$ we have
\[\limsup_{n \to \infty} \mathbb{P}(e_n^* \leq c_nr) \leq  \exp \left( -\kappa_1 r^{d-\alpha} \right)\]
and
\[\liminf_{n \to \infty} \mathbb{P}(e_n^* \leq c_nr) \geq  \exp \left( -\kappa'_1 r^{d-\alpha} \right).\]
\item If the function $g$ is given by \textnormal{(G)} with parameters $\lambda,\alpha \in (0,\infty)$ then there exist $0<\kappa_2,\kappa_2'<\infty$ such that for every $r\in \rr$ we have
\[\limsup_{n \to \infty} \mathbb{P}(e_n^*\leq c_{n} r +b_{n}) \leq  \exp \left( -\kappa_2 e^{-r} \right) \]
and
\[\liminf_{n \to \infty} \mathbb{P}(e_n^* \leq c_{n} r +b_{n}) \geq \exp \left( -\kappa_2' e^{-r} \right) , \]  
where $$b_n=\lambda^{-\frac{1}{\alpha}}\left(d\ln n+\left(\frac{d}{\alpha}-1\right)\ln\left(d\ln n +\ln K\right)+\ln\left(K\Gamma\left(\frac{d}{\alpha}\right)^{-1}\right)\right)^\frac{1}{\alpha}$$
for $K=K_{ d, \alpha}= d2^d\omega_d\Gamma(\frac{d}{\alpha})\alpha^{-1}\lambda^{-\frac{d}{\alpha}}$ and $c_n =\alpha^{-1}\lambda^{-1}b_n^{1-\alpha}$ for every $n\in\N$.
\item Let specifically $V=V^{\text{cont}}$. If the function $g$ is given by \textnormal{(W)} with the parameters $M, \alpha\in(0,\infty)$ then there exist $0<\kappa_3,\kappa_3'<\infty$ such that for $c_n$ and $b_n$, $n\in\N$, as specified in Theorem \ref{prop:MaxEdgedLRPq2} and for every $r\in (-\infty,0]$ we have
\[\limsup_{n \to \infty} \mathbb{P}(e_n^* \leq c_nr + b_n) \leq  \exp \left( -\kappa_3 (-r)^{\alpha+1} \right)\]
and
\[\liminf_{n \to \infty} \mathbb{P}(e_n^* \leq c_nr + b_n) \geq  \exp \left( -\kappa'_3 (-r)^{\alpha+1} \right).\]
\end{enumerate}
\end{cor}

\begin{remark}\it \label{rwd}
In the discrete model (both for the directed and undirected versions) a Weibull convergence fails, due to the fact that the edge lengths are discrete random variables. Indeed, if one defines $c_n^{-1} (e_n^* - M)$ as in the statement of Theorem \ref{thcnew} \eqref{cthw} then one can see that
$$c_n^{-1} (e_n^* - M) \to -\infty$$
almost surely, as $n\to \infty$, which is an interesting difference between the discrete and continuous model.
\end{remark}

Here are further brief comments about Corollary \ref{th:MaxEdgeLRPd1}. We can at least ensure that, in arbitrary dimension, under the regime in which the probability of an edge being present has a polynomial decay the maximum of the normalized edge lengths behaves almost like a Fr\'echet distribution with parameter $\alpha-d$, whereas it behaves almost like a Gumbel distribution in the case of exponential decay and so on.

The proofs of Theorems \ref{thd1}, \ref{thd2} and \ref{prop:MaxEdgedLRPq} for the discrete model and of Theorems \ref{th:MaxEdgeLRPc}, \ref{thcnew} and \ref{prop:MaxEdgedLRPq2} for the continuous model are split into Sections 3 and 4, respectively. We start with the proof of Theorem \ref{thd1} in the following Section.

\section{Proofs for the discrete model} \label{proofsdiscrete}

Let us recall that, for two non-negative, integer-valued random variables $X$ and $Y$, the \emph{total variation distance} $\d_{\text{TV}}$ and the \emph{Wasserstein distance} $\d_{\text{W}}$ between $X$ and $Y$ are respectively defined by
\[\d_{\text{TV}}\left(X,Y\right)=\sup_{A\subset \mathbb{N}_0} \left\vert \mathbb{P}\left(X\in A\right)-\mathbb{P}\left(Y\in A\right)\right\vert\]
and
\[\d_{\text{W}}\left(X,Y\right)=\sup_{f\in \text{Lip}(1)} \left\vert \mathbb{E}\left[f(X)\right]-\mathbb{E}\left[f(Y)\right]\right\vert\]
where $\mathbb{N}_0=\mathbb{N}\cup\{0\}$ and $\text{Lip}(1)$ denotes the set of Lipschitz functions from $\mathbb{N}_0$ to $\mathbb{R}$ with Lipschitz constant at most 1.


Our strategy in the following is to prove the Poisson convergence by establishing an upper bound on the total variation distance between the random variable $W(n,r)$ and its limit $W(r)$ in \eqref{eq:cvWn}. Then, by rewriting the exceedances as independent random variables we can use the so-called law of small numbers, see \cite[Theorem 4.6]{ross}. The main technical difficulty is then shifted to the exact calculation of expected values, which becomes quite tedious even in dimension one and essentially intractable in larger dimensions. \newline


\noindent \textit{Proof of Theorem \ref{thd1}.} {We will use \cite[Theorem 4.6]{ross} as described above.} Let $x_1, \ldots, x_{(2n+1)^d}$ be a given enumeration of $[-n,n]^d$ in $\integer^d$ and  $I=\{1, \ldots, (2n+1)^d\}$. Recall the definition of $W(n,r)$ in \eqref{def:NbExceed}. A crucial observation is that in all of the considered cases, namely $d \geq 1$, $\alpha \in (d,2d)$ and $r \in (0, \infty)$ or $d \geq 1$, $\alpha =2d$ and $r \in [2d ((\alpha-d)2^{-d}d^{-1}\omega_d^{-1})^\frac{1}{\alpha-d}, \infty)$, we have
\begin{align*}
    W(n,r) & = \sum_{x\in V \cap B_n} \sum_{y\in V } \mathbf{1}_{\{ \|x-y\| \mathbf{1}_{\{x\leftrightarrow y\}} >c_n r \}} - \frac12 \sum_{x\in V \cap B_n} \sum_{y\in V \cap B_n} \mathbf{1}_{\{ \|x-y\| \mathbf{1}_{\{x\leftrightarrow y\}} >c_n r \}} \\
    & = \sum_{x\in V \cap B_n} \sum_{y\in V } \mathbf{1}_{\{ \|x-y\| \mathbf{1}_{\{x\leftrightarrow y\}} >c_n r \}} .
\end{align*}
Let us denote
$$W(n,r) = \sum_{i=1}^{(2n+1)^d} \sum_{y \in \integer^d} X_i^n(y)$$
with the random variables of exceedance
 $$X^n_i (y)= \mathbf{1}_{\{\|x_i-y\| \mathbf{1}_{\{x_i\leftrightarrow y\}} >c_n r\}}  .$$
Note that
$$ p_n :=  \sum_{y \in \integer^d}\mathbb{E}[X_i^n(y)]=\sum_{y \in \integer^d\setminus B_{c_nr}(0)}g(y)$$
is independent of $i$ for all $i\in I$.
 Recall that, since we are in case (F), the connection function $g$ satisfies
$ \lim_{\|z\| \to \infty} g(z)\|z\|^{\alpha}= 1$ and that ${c}_n= ((\alpha-d)2^{-d}d^{-1}\omega_d^{-1})^\frac{1}{d-\alpha}n^\frac{d}{\alpha-d}$. Using 
the fact that
$$\int_{l}^\infty f(z) dz \leq \sum_{j=l}^\infty f(j) \leq \int_{l-1}^\infty f(z) dz$$
for $l \in \integer$ and monotonically decreasing functions $f$ it is easy to see that
\begin{equation*}
 \lim_{n \to \infty} \mathbb{E} [W(n,r)] = r^{d-\alpha}.
\end{equation*}
Now, let $P(n,r)$ be a Poisson random variable with mean $\mathbb{E} [W(n,r)]$. Since the total variation distance between two Poisson random variables is bounded by the absolute value of the difference of their parameters, it suffices to check that
\[\d_{\text{TV}}(W(n,r),P(n,r))\longrightarrow 0,\quad n\to \infty,\]
in order to show that $W(n,r)$ converges to $W(r)$ in distribution. Since $X^n_i(y)$ are independent, by \cite[Theorem 4.6]{ross} we thus have
	\[\d_{\text{TV}}(W(n,r),P(n,r)) \leq \min \{1, \mathbb{E} [W(n,r)]\} p_n \longrightarrow 0,\quad n\to \infty. \]
Thus we obtain the desired convergence result. Moreover, we conclude the proof of Theorem \ref{thd1} \eqref{d11} as follows. Note that 
$$\P(c_n^{-1}e_n^\star \leq r)=\P(W(n,r)=0) \to \exp (-r^{d-\alpha}), \quad n \to \infty.$$
Thus, $c_n^{-1}e_n^\star$ converges in distribution to a Fr\'echet random variable with parameter $\alpha-d$.
$\hfill \qed$
\medskip

\noindent \textit{Proof of Theorem \ref{thd2}.}
We only give a proof for the Fr\'echet case (F). The proof for the Gumbel case (G) only exhibits minor changes as compared to the Fr\'echet case when $\alpha > 2$.

By Theorem \ref{thd1} it only remains to consider the cases {\it (i)} $\alpha =2$ and $r\in (0,\frac12)$ and {\it (ii)} $\alpha >2$ and $r\in (0, \infty)$. We will proceed as in the proof of Theorem \ref{thd1}, but with minor modifications of the random variables $X_i^n(y)$, which nevertheless result in some crucial technical changes. The modifications come from the fact that we want to maintain writing the number of exceedances as a sum of independent random variables. At first, let us notice that by shift-invariance of the model we can consider $[0,2n]$ instead of $[-n,n]$ as observation window. This will be done in order to simplify the notations in the following calculations. Furthermore, {we enumerate the points in $[0,2n]$ in increasing order $x_1 = 0<x_2<\dots<x_{2n+1}=2n$,} $I=\{1, \ldots, 2n+1\}$ and define the random variables of exceedance
$$X^n_i(y) = \mathbf{1}_{\{{y \in \integer \setminus \{x_1, \ldots, x_i\} }, \|x_i-y\| \mathbf{1}_{\{x_i\leftrightarrow y\}} >c_n r\}} .$$
Note that {the $X_i^n(y)$ are independent by default, but
$$ p_n(i) :=  \sum_{y \in \integer}\mathbb{E}[X_i^n(y)]$$
now depends on $i\in I$. As before, we can conclude that
\[\d_{\text{TV}}(W(n,r),W(r))\longrightarrow 0,\quad n\to \infty,\]
as $n \to \infty$, and it only remains to calculate the limit of $$\mathbb{E} [W(n,r)]=\sum_{1\leq i\leq 2n+1}p_n(i).$$

Let


\[S(i,n,r)=\sum_{y \in B^C_{c_nr} (x_i)\setminus\{x_1,\dots, x_{i-1}\}} \|x_i-y\|^{-\alpha}\]
and $B_R (x_i) := \{y \in \integer : \|x_i-y\| \leq R\}.$ Then $S(i,n,r)$ obviously tends to 0, as $n\to\infty$.
Let us write $f_n\sim g_n$ if $\frac{f_n}{g_n}\rightarrow 1$, as $n\to\infty$. Since actually $\max_{1\leq i\leq 2n+1}S(i,n,r)\rightarrow 0$, as $n\to\infty$, we have:
\begin{align}
\E [W(n,r)]&=\sum_{i=1}^{2n+1}p_n(i)\sim\sum_{i=1}^{2n+1} S(i,n,r)\nonumber\\
	& =\sum_{i=1}^{2n+1} \Big( \sum_{y \in B^C_{{c}_nr} (x_i)} \|x_i-y\|^{-\alpha} - \sum_{k=1}^{i-1} \|x_i-x_k\|^{-\alpha} \mathbf{1}_{[0, \|x_k-x_i\|)} ({c}_nr) \Big) \nonumber\\
	& = \sum_{i=1}^{\lfloor {c}_n r\rfloor+1} \sum_{y \in B^C_{{c}_nr} (x_i)} \|x_i-y\|^{-\alpha}\nonumber \\
&\quad + \sum_{i=\lfloor {c}_n r\rfloor+2}^{2n+1} \Bigg( \sum_{y \in B^C_{{c}_nr} (x_i)} \|x_i-y\|^{-\alpha}  - \sum_{k=1}^{i-1} (i-k)^{-\alpha} \mathbf{1}_{[0, i-k)} ({c}_nr) \Bigg) \nonumber\\
	& = 2\left(\lfloor {c}_n r\rfloor+1\right) \sum_{j=\lfloor {c}_n r\rfloor+1}^\infty j^{-\alpha} + \sum_{i=\lfloor {c}_n r\rfloor+2}^{2n+1} \Bigg( 2\sum_{j=\lfloor {c}_n r\rfloor+1}^\infty j^{-\alpha}  - \sum_{k=1}^{i-\lfloor {c}_n r\rfloor-1} (i-k)^{-\alpha}  \Bigg)
	\nonumber\\
	& = 2\left(\lfloor {c}_n r\rfloor+1\right) \sum_{j=\lfloor {c}_n r\rfloor+1}^\infty j^{-\alpha} + \sum_{i=\lfloor {c}_n r\rfloor+2}^{2n+1} \Bigg( 2\sum_{j=\lfloor {c}_n r\rfloor+1}^\infty j^{-\alpha}  - \sum_{j=\lfloor {c}_n r\rfloor+1}^{i-1} j^{-\alpha}  \Bigg) \nonumber\\
	& = 2\left(\lfloor {c}_n r\rfloor+1\right) \sum_{j=\lfloor {c}_n r\rfloor+1}^\infty j^{-\alpha} + \sum_{i=\lfloor {c}_n r\rfloor+2}^{2n+1} \Bigg( \sum_{j=i}^\infty j^{-\alpha}    + \sum_{j=\lfloor {c}_n r\rfloor+1}^\infty j^{-\alpha}  \Bigg)\nonumber \\
	& = \left(2n+\lfloor {c}_n r\rfloor+2\right) \sum_{j=\lfloor {c}_n r\rfloor+1}^\infty j^{-\alpha} + \sum_{i=\lfloor {c}_n r\rfloor+2}^{2n+1}  \sum_{j=i}^\infty j^{-\alpha}\nonumber\\
	& \sim  \frac{1}{\alpha-1}\left(\left(2n+\lfloor {c}_n r\rfloor+2\right) \left(\lfloor {c}_n r\rfloor+1\right)^{1-\alpha} + \sum_{i=\lfloor {c}_n r\rfloor+2}^{2n+1}  i^{1-\alpha}\right) .\label{eq:dim1}
\end{align}
Recall that $c_n=Kn^{\frac{1}{1-\alpha}}$, where the value of the constant $K$ is given in Theorem \ref{thd1}. The result readily follows in the case $\alpha >2$ by definition of the constant $K$, since
$$\sum_{i=\lfloor c_n r\rfloor+2}^{2n+1}  i^{1-\alpha}$$
obviously forms a null sequence. 
Now, we focus on the case $\alpha =2$. Note that, with the notations of Lemma \ref{a1}, $K=K_{1,2}=4$ in this case. For $c_n=4n$ and $r\in (0,\frac12)$, the right hand side of \eqref{eq:dim1} converges to $\frac{1+2r}{2r} - \log (2r)$, as $n$ goes to $\infty$. This together with Theorem \ref{thd1} concludes the case $\alpha =2$. $\hfill \qed$
} \newline

The above computation shows that straightforward calculations become very tedious, even in dimension one, due to the corrective term appearing in {\eqref{eq:dim1}}. In fact, in dimension $d\geq 2$ the calculations essentially become intractable. The introduction of directed models gives a setting in which the random variables of exceedances are i.i.d.\,and thus the Poisson limit is easy to obtain (see Theorem \ref{prop:MaxEdgedLRPq}). Finally, the coupling between directed an undirected long-range percolation models permits to partially understand the asymptotic behavior of the longest edge in undirected LRP in the remaining cases (see Corollary \ref{th:MaxEdgeLRPd1}).

We now give the proof of Theorem \ref{prop:MaxEdgedLRPq} for dLRPq. The result for dLRP follows from the same lines and then Corollary \ref{th:MaxEdgeLRPd1} follows by using the coupling. 

\medskip

\noindent \textit{Proof of Theorem \ref{prop:MaxEdgedLRPq}.}
{We only give the proof of Theorem \ref{prop:MaxEdgedLRPq} for dLRPq. The result for dLRP follows from the same lines and then Corollay \ref{th:MaxEdgeLRPd1} follows by using the coupling. Since $W^{\underset{q}{\rightarrow}} (n,r)$ has a binomial distribution, it suffices to check that $\E[W^{\underset{q}{\rightarrow}} (n,r)]$ converges to the desired quantity, as $n\to\infty$.

Let $r\geq 0$ and assume first that $g$ is given by \textnormal{(F)} with some $\alpha\in(d,\infty)$. By arguing as in the proof of Theorem \ref{thd2}, one can see that
\begin{align*}
\E\left[W^{\underset{q}{\rightarrow}} (n,r)\right]&\sim \sum_{x_i\in B_n}\sum_{y \in B^C_{r_n} (x_i)\cap\left({x_i}+\QQd\right)} \|x_i-y\|^{-\alpha}\\&= (2n+1)^d\sum_{y \in B^C_{r_n}(0)\cap \QQd} \|y\|^{-\alpha}\\
&= (2n+1)^d\sum_{k=\lfloor r_n\rfloor +1}^\infty q_d(k) k^{-\alpha}
\end{align*}
where $q_d(k)=\#\{y\in\QQd:\,\Vert y\Vert=k\}=\frac{\kappa}{2^d} k^{d-1}+O(k^{d-2})$. Since $\alpha>d$, the latter quantity asymptotically behaves as
\begin{align*}
\frac{\kappa n^d}{\alpha-d} \lfloor Kc_nr\rfloor^{d-\alpha}+O\left(n^d\lfloor Kc_nr\rfloor^{d-\alpha-1}\right)= r^{d-\alpha}+O\left(n^\frac{d}{d-\alpha}r^{d-\alpha-1}\right)
\end{align*}
and the first assertion of Theorem \ref{prop:MaxEdgedLRPq} follows, by definition of the constant $K$ as in the proof of Theorem \ref{thd1} \eqref{d11}.
\medskip

Now we turn to the Gumbel case, let us now assume that $g$ is given by \text{(G)} for some $\lambda, \alpha \in (0, \infty)$. Similar computations as in the previous cases show that we are interested in the asymptotic behavior of
	\begin{align}
	 -\kappa n^d\sum_{k=r_n+1}^\infty  k^{d-1}\ln\left(1-\exp(-\lambda k^\alpha) \right)& \sim -\kappa n^d\int_{r_n}^\infty x^{d-1}\ln\left( 1-\exp(-\lambda x^\alpha)\right)\d x\nonumber\\
 & = c\kappa n^d\int_{r_n}^\infty x^{d-1}\exp(-\lambda x^\alpha)\d x \nonumber\\&= -\frac{\kappa}{\alpha} n^d\int_{r^\alpha_n}^\infty s^\frac{d-\alpha}{\alpha}\exp(-\lambda s)\d s\nonumber\\
	 &= c\frac{\kappa\Gamma\left(\frac{d}{\alpha}\right)}{\alpha\lambda^\frac{d}{\alpha}} n^d\P\left(Y>r^\alpha_n\right)\label{eq:asymGumbel2} 
	\end{align}}
\noindent for some suitable constant $c$, where  $Y$ is a Gamma-distributed random variable (see also the proof of Lemma \ref{a3} in the Appendix). As in the proof of Lemma \ref{a3} in the Appendix from the definition of the sequences $(c_n)_{n\in\N}$ and $(b_n)_{n\in\N}$ we get
	\begin{equation*}
		\lim_{n \to \infty} n \P(Y^{\frac{1}{\alpha}}> {c}_nr + {b}_n) = e^{-r}, \quad r\in \rr.
	\end{equation*}
	Together with \eqref{eq:asymGumbel2} this gives Theorem \ref{prop:MaxEdgedLRPq} by proceeding as above. Now the proof is complete.$\hfill \qed$

\medskip

\section{Proofs for the continuous model} \label{proofscont}

Theorems \ref{th:MaxEdgeLRPc} and \ref{thcnew} will be a consequence of Proposition \ref{pr:continuousCase} and the calculations in Appendix \ref{App}. The proof of Theorem \ref{prop:MaxEdgedLRPq2} is essentially a repetition of the arguments with only minor modifications. Recall again the definition of $W(n,r)$ from \eqref{def:NbExceed}. The following Proposition gives upper bounds for the total variation and the Wasserstein distances between the number $W(n,r)$ of exceedances in the observation window $B_n$ and a suitable Poisson random variable $P(n,r)$.

\begin{prop}\label{pr:continuousCase} Let $P(n,r)$ be a Poisson distributed random variable with mean $\beta_{n,r}=\mathbb{E}\left[W(n,r)\right]$. Then, 
\begin{equation*}d_{\text{TV}}\left(W(n,r),P(n,r)\right)\leq  2 (2n)^d \min(1,\beta_{n,r}^{-1})\left(\int_{B^C_{r_n}(0)}g(y)dy\right)^2
\end{equation*}
and 
\begin{equation*}d_{\text{W}}\left(W(n,r),P(n,r)\right)\leq 6 (2n)^d \min(1,\beta_{n,r}^{-\frac{1}{2}})\left(\int_{B^C_{r_n}(0)}g(y)dy\right)^2.
\end{equation*}
\end{prop}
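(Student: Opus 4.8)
The plan is to realize $W(n)$ as a sum of $\{0,1\}$-valued scores over the points of a marked Poisson process and then to invoke the abstract Poisson approximation bound \cite[Theorem 3.1]{Penrose2}. First I would set up the formal construction of the random connection model as a functional of a marked Poisson process $\hat\eta$ on $\rd\times\mathbb{M}$, where $\mathbb{M}$ is a mark space rich enough that, conditionally on the locations, each potential edge $\{x,y\}$ is declared present with probability $g(x-y)$ independently of all others and in a symmetric, measurable fashion; this is exactly the construction the introduction promises to supply. With $x\leftrightarrow y$ thereby expressed as a measurable function of $\hat\eta$, the exceedance event $\{\max_{y\in V}\|x-y\|\1_{\{x\leftrightarrow y\}}>r_n\}$ becomes $\{\exists y\in\Pcal:\ \|x-y\|>r_n,\ x\leftrightarrow y\}$, so that
$$W(n)=\sum_{x\in \hat\eta\cap B_n}\xi(x,\hat\eta),\qquad \xi(x,\hat\eta)=\1\{\exists y:\ \|x-y\|>r_n,\ x\leftrightarrow y\},$$
is a sum of indicator scores to which the abstract theorem applies (points of $\hat\eta$ being identified with their spatial component).

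Writing $G_n:=\int_{B^C_{r_n}(0)}g(y)\,\d y$, I would first evaluate $\beta_n=\E[W(n)]$ via the Mecke equation together with the probability generating functional of the Poisson process. Adding a point at $x$, its long neighbours form a thinned Poisson process of intensity $\rho\,g(x-\cdot)$ on $B^C_{r_n}(x)$, of total mass $\rho G_n$; hence the Palm probability that $x$ is an exceedance equals $1-e^{-\rho G_n}$, giving $\beta_n=\rho(2n)^d\bigl(1-e^{-\rho G_n}\bigr)$. The same pgfl computation is reused throughout to turn the various ``no long edge'' events into exponential factors $e^{-\rho G_n}$.

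Next I would apply \cite[Theorem 3.1]{Penrose2}, whose bound for such a sum of scores is a sum of integrals over the intensity measure of Palm expectations built from the add-one- and add-two-point difference operators of $\xi$, premultiplied by the Stein factors $\min(1,\beta_n^{-1})$ for the total variation distance and $\min(1,\beta_n^{-1/2})$, carrying the universal constant $3$, for the Wasserstein distance. The decisive structural observation is that $\xi(x,\hat\eta)$ depends on $\hat\eta$ only through the edges joining $x$ to points at distance exceeding $r_n$; consequently adding a point $y$ can alter $\xi(x,\cdot)$ only when $\|x-y\|>r_n$, and two scores interact only through such a long edge. This is the locality in the edge variable that compensates for the model being spatially non-local, and it is what keeps the integrals finite, using $\int_\rd g<\infty$ from \eqref{g1}.

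Finally I would carry out the resulting integral computations. The key point is that the first-order (linear in $G_n$) contribution of each difference operator reproduces exactly the mean $\beta_n$ and therefore cancels in the Poisson defect; what survives is the second-order part, namely the Palm probability that a single point emits two or more long edges, or that two points are simultaneously exceedances through a shared long edge. Since the number of long edges from a point is Poisson with mean $\rho G_n$, these events carry probability of order $G_n^2$, and integrating the density $\rho$ over $B_n$ produces the factor $\rho(2n)^d G_n^2$; multiplying by the two Stein factors yields the claimed bounds. I expect the genuine obstacle to be twofold: \textbf{(i)} making the marked-point-process construction rigorous while respecting the undirected (symmetric) nature of the edges and the conditional independence given the locations, so that the hypotheses of \cite[Theorem 3.1]{Penrose2} are literally met; and \textbf{(ii)} organising the difference-operator integrals so that the linear-in-$G_n$ terms visibly cancel against $\beta_n$ and only the quadratic remainder $G_n^2$ is retained, since a naive bound that kept the linear term would be of the same order as $\beta_n$ and would fail to converge.
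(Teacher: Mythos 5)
Your overall route is the paper's: construct the random connection model as a deterministic functional of a marked Poisson process, identify $W(n)$ as a sum of indicator scores over the points in $B_n$, compute $\beta_n=\rho(2n)^d\bigl(1-e^{-\rho\int_{B^C_{r_n}(0)}g}\bigr)$ via the Palm/Mecke calculus, and feed everything into \cite[Theorem 3.1]{Penrose2} with the Stein factors $\min(1,\beta_n^{-1})$ and $3\min(1,\beta_n^{-1/2})$. Your ``decisive structural observation'' --- that the score at $x$ can only be affected through long edges incident to $x$ --- is exactly the mechanism the paper exploits.

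The one substantive misstep is your description of what \cite[Theorem 3.1]{Penrose2} asks for. It is not a bound expressed through add-one and add-two-point difference operators with a linear term that must be seen to cancel against $\beta_n$; it is a coupling theorem. For a.e.\ $x$ with $p(x)>0$ one must exhibit, on a common probability space, variables $U_x\overset{d}{=}W(n)$ and $V_x$ such that $1+V_x$ has the conditional law of the exceedance count of the enlarged process $\eta\cup\{(x,\tau)\}$ given an exceedance at $x$, and then bound $\E\left[\vert U_x-V_x\vert\right]\leq w(x)$; the conclusion is then directly $\min(1,\beta_n^{-1})\int w(x)p(x)\operatorname{d}x$ (with $3\min(1,\beta_n^{-1/2})$ for Wasserstein). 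The natural coupling --- $U_x$ the number of exceedances in the subgraph of $G(\eta\cup\{(x,\tau)\})$ induced by $\eta$, and $V_x$ the number of exceedances at points of $\eta\cap B_n$ in the full enlarged graph --- satisfies $V_x\geq U_x$ and $V_x-U_x\leq\sum_{y\in\Pcal\cap B_n}\1_{\{\|x-y\|\1_{\{x\leftrightarrow y\}}>r_n\}}$, whence the Mecke formula gives $w(x)\leq\rho\int_{B_n}\1_{\{\|x-y\|>r_n\}}g(x-y)\operatorname{d}y\leq\rho\int_{B^C_{r_n}(0)}g$. Multiplying by $p(x)\leq\1_{\{x\in B_n\}}\int_{B^C_{r_n}(0)}g$ (your Palm computation) and integrating over $x$ yields $\rho(2n)^d\bigl(\int_{B^C_{r_n}(0)}g\bigr)^2$ with no cancellation to organize: the coupling is intrinsically second order, so your anticipated obstacle (ii) does not arise. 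Obstacle (i) is genuine but is resolved exactly as you suggest, by fixing a bounded Borel partition of $\rd$, ordering points lexicographically within each cell, and reading off edge variables from a double array of i.i.d.\ uniform marks.
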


In order to prove Proposition \ref{pr:continuousCase}, we will apply a variant of Theorem 3.1 in \cite{Penrose2}.  It deals with the case where the underlying marked Poisson point process $\eta$ consists of an homogeneous Poisson point process on $\mathbb{R}^d$ with intensity $\rho\in (0,\infty)$ and marks in some mark space $\left(\mathbb{M},\mathcal{M},\mathbf{m}\right)$ where $\mathbf{m}$ is a diffusive probability measure. {We will provide a construction of the model under consideration from a marked Poisson point process in Subsection \ref{ssec:construction}.} With a slight abuse of notation, we also denote by $\rho$ the measure defined by $\rho (\d x)=\rho\d x$. As noticed in that paper, only minor changes in the proof of \cite[Theorem 3.1]{Penrose2} lead to the version we will use below. We write $\mathbf{S}$ for the set of all locally finite subsets of $\mathbb{R}^d\times\mathbb{M}$ and $\mathbf{S}_k$ for the set of subsets of $\mathbb{R}^d\times\mathbb{M}$ of cardinality $k$, $k\in\mathbb{N}$.

\begin{theorem}[{\cite[Theorem 3.1.]{Penrose2}}] \label{Th:Penrose18}

Let $k\in\mathbb{N}$, $f:\,\mathbf{S}_k\times \mathbf{S} \longrightarrow \{0,1\}$ a measurable function and for $\xi\in \mathbf{S}$ set: 
\[F(\xi):=\sum_{\psi\in\mathbf{S}_k:\psi\subset\xi}f(\psi,\xi\setminus \psi).\]

Let $\eta$ be a (marked) Poisson point process with intensity $\rho\times \mathbf{m}$ in $\mathbb{R}^d\times \mathbb{M}$ and set $Z:=F(\eta)$ and $\beta:=\mathbb{E}[Z]$. For $x_1,\dots,x_k\in\mathbb{R}^d$, set $p(x_1,\dots,x_k):=\mathbb{E}\left[f\left((x_1,\tau_1),\dots,(x_k,\tau_k),\eta\right)\right]$ where the $\tau_i$ are independent random elements of $\mathbb{M}$ with common distribution $\mathbf{m}$. 

Suppose that for almost every $\mathbf{x}=(x_1,\dots,x_k)\in \left(\mathbb{R}^d\right)^k$ with $p(x_1,\dots,x_k)>0$ we can find coupled random variables $U_\mathbf{x}$ and $V_\mathbf{x}$ such that:
\begin{enumerate}
\item $Z\overset{d}{=}U_\mathbf{x}$,
\item $F\left(\eta\cup\overset{k}{\underset{i=1}{\bigcup}}\{(x_i,\tau_i)\}\right)$ conditional on $f\left(\overset{k}{\underset{i=1}{\bigcup}}\{(x_i,\tau_i)\},\eta\right)=1$ has the same distribution as $1+V_\mathbf{x}$,
\item $\mathbb{E}\left[\vert U_\mathbf{x}-V_\mathbf{x}\vert\right]\leq w(\mathbf{x})$ where $w$ is a measurable function.
\end{enumerate}

Let $P(\beta)$ be a mean $\beta$ Poisson random variable. Then

\begin{equation}d_{\text{TV}}\left(Z,P(\beta)\right)\leq\frac{\min(1,\beta^{-1})}{k!}\int_{(\mathbb{R}^d)^k}w(\mathbf{x})p(\mathbf{x})\operatorname{d} \mathbf{x}\label{th:Pdtv}\end{equation}
and 
\begin{equation}d_{\text{W}}\left(Z,P(\beta)\right)\leq\frac{3\min(1,\beta^{-\frac{1}{2}})}{k!}\int_{(\mathbb{R}^d)^k}w(\mathbf{x})p(\mathbf{x})\operatorname{d} \mathbf{x}.\label{th:PdW}\end{equation}

\end{theorem}  

Here are brief explanations regarding Theorem \ref{Th:Penrose18}. We will apply this Theorem with $k=2$ and a function $f$ that selects the points of $\eta \cap B_n$ where exceedances take place, so that $F$ is the number of exceedances.

\subsection{The typical maximal edge length}

In order to apply Theorem \ref{Th:Penrose18}, we will enlarge the random graph by adding two (marked) points at $x_1,x_2\in\mathbb{R}^d$ to define the coupled random variables $U_{x_1, x_2}$ and $V_{x_1, x_2}$. So, we will be led to work under the (2-fold) Palm measure associated with the underlying Poisson point process (see {\it e.g.$\,$} \cite[Sections 3.3 and 3.4]{SW} or \cite[Chapter 9]{last3} for an overview). We denote by $\mathbb{P}_{x_1, x_2}$ the Palm measure obtained by adding $x_1, x_2$ to the Poisson point process and by $\mathbb{E}_{x_1, x_2}$ the corresponding expectation.  

The next result is also of independent interest. To state it, let us recall that $e_0^*= \max_{y\in\p} \|y\| \mathbf{1}_{\{0 \leftrightarrow y\}}.$
Here, for the sake of generality, we work under the Palm measure $\mathbb{P}_0$ associated with a Poisson point process of intensity $\rho$ in $\mathbb{R}^d$. This also allows us to deduce the asymptotic behavior of $e_0^*$ when $\rho\to\infty$ (see Corollary \ref{cor:GrowingIntensities}).  

\begin{lemma} \label{tm}
	It holds
$$ \mathbb{P}_0( e_0^* \leq  r) = \exp \Big( -\rho   \int_{B_r^C(0)} g(x) dx \Big).$$
In particular, { for every $\varepsilon>0$, for $r$ large enough,} it holds
$$  (1-\varepsilon) \rho  \int_{B_r^C(0)} g(x) dx \leq \mathbb{P}_0( e_0^* > r) \leq \rho  \int_{B_r^C(0)} g(x) dx.$$
\end{lemma}

\begin{remark}\it
Lemma \ref{tm} gives the behavior of the tail distribution of $e_0^*$ with respect to the connection function $g$. In particular, one can see that $e_0^*$ has regularly varying tail in the Fr\'echet case \textnormal{(F)}, exponential tail in the Gumbel case \textnormal{(G)} and it has a power law behavior at its (finite) right endpoint in the Weibull case \textnormal{(W)}.
\end{remark}

\begin{proof}
	In this proof we borrow some ideas of the proof of \cite[Proposition 1]{Penrose1}. Let us note that the point process of those points that are connected by an edge to 0 in the random connection model constructed from $\mathcal{P}\cup\{0\}$ with connection function $g$ is a $g$ independent thinning of the Poisson point process $\mathcal{P}$. It is thus a Poisson point process with intensity $\rho g(\cdot)$ on $\mathbb{R}^d$ and the first claim follows. For the second claim recall that $1-\exp(-x) \sim x$ for $x \to 0$ and that $\rho \int_{B^C_r(0)} g(x) dx \to 0$, as $r \to \infty$ by assumption. Thus, for $r$ sufficiently large we get
	$$ (1-\varepsilon) \rho   \int_{B^C_r(0)} g(x) dx  \leq \mathbb{P}_0 ( E_{B^C_r(0)} \geq 1) \leq \rho   \int_{B^C_r(0)} g(x)dx ,$$
	and the corresponding claim is proven.
\end{proof}

From Lemma \ref{tm} and similar calculations as in the lemmas of the Appendix \ref{App} we get a following convergence result for increasing intensities, that is if $\rho \to \infty$, instead of growing observation balls, which is in the same spirit as some of the results in \cite{iyer}.
{\begin{cor}\label{cor:GrowingIntensities}
\begin{enumerate}
\item Assume that the function $g$ is given by \textnormal{(F)} with parameter $\alpha \in (d, \infty)$ and define $c_\rho= ((\alpha-d) \omega_d^{-1})^\frac{1}{d-\alpha}\rho^\frac{1}{\alpha-d}$. Then we have, under the Palm measure, that
$$c_\rho^{-1} e_0^* \xrightarrow[\rho \to \infty]{d} \Phi_{\alpha-d} ,$$
where $\Phi_{\alpha-d}$ has a Fr\'echet distribution with parameter $\alpha-d$.

\item Assume that the function $g$ is given by \textnormal{(G)} with parameters $\lambda,\alpha \in (0, \infty)$ and define $K=K_{ d, \alpha}= d2^d\omega_d\Gamma(\frac{d}{\alpha})\alpha^{-1}\lambda^{-\frac{d}{\alpha}}$. Further define 
  \begin{equation*}
      b_\rho=\lambda^{-\frac{1}{\alpha}}\left(\ln \rho +\left(\frac{d}{\alpha}-1\right)\ln\left(\ln \rho +\ln K\right)+\ln\left(K\Gamma\left(\frac{d}{\alpha}\right)^{-1}\right)\right)^\frac{1}{\alpha}
 \end{equation*} 
and $c_\rho =\alpha^{-1}\lambda^{-1}b_\rho^{1-\alpha}$. Then we have, under the Palm measure, that
$$ \lambda (e_0^* - b_\rho)  \xrightarrow[\rho\to \infty]{d} \Lambda,$$
where $\Lambda$ has a Gumbel distribution.
\item Assume that the function $g$ is given by \textnormal{(W)} with parameters $M,\alpha \in (0, \infty)$. Let $K=K_{ d, M,\alpha}=(M^\alpha(\alpha+1)2^{-d}d^{-1}\omega_d^{-1})^\frac{1}{\alpha+1}$ and define $c_\rho =K\rho^{-\frac{1}{\alpha+1}}$ and $b_\rho = M$. Then we have, under the Palm measure, that
$$ c_\rho^{-1} (e_0^* - M)  \xrightarrow[\rho\to \infty]{d} \Psi_{\alpha+1},$$
where $\Psi_{\alpha+1}$ has a Weibull distribution with parameter $\alpha+1$.
\end{enumerate}
\end{cor}}

\subsection{A construction of the random connection model}\label{ssec:construction}

We now recall how the random connection model can be constructed from a marked Poisson point process; see \cite[Section 4]{last2} for a similar construction. We choose $\mathbb{M}:=[0,1]^{\mathbb{N}\times\mathbb{N}}$ as mark space and $\mathbf{m}$ to be the distribution of a double sequence of independent random variables uniformly distributed on $[0, 1]$. Then we consider an independent $\mathbf{m}$-marking $\eta$ of $\mathcal{P}$ (that is a Poisson point process with intensity $\rho \times \mathbf{m}$ on $\mathbb{R}^d\times \mathbb{M}$; see {\it e.g.$\,$}\cite[Theorem 3.5.7]{SW}) and we fix a partition $\{D_i\}_{i\in\mathbb{N}}$ of $\mathbb{R}^d$ that consists of bounded Borel sets. For $x,x'\in\mathbb{R}^d$, we write $x'\lex x$ if $x'$ is smaller than $x$ in the lexicographic order. For $\left(x,\mathbf{u}=(u_{k,l})_{k,l\in\mathbb{N}}\right)$ and $i\in \mathbb{N}$, note that $\left\{x'\in\mathcal{P}\cap D_i:\, x'\lex x\right\}$ is a.s.$\,$finite since $D_i$ is bounded. {Thus, we can enumerate the elements of this set such that $x_1\lex x_2\lex \dots \lex x_r\lex x$ and set $U(\eta,x,x_j):=u_{i,j}$. Since $\mathcal{P}$ is a.s.$\,$simple, for any pair $\{x,x'\}$ of distinct points in $\mathcal{P}$, we have $x\lex x'$ or $x'\lex x$ thus $U(\eta,x,x')$ (if $x'\lex x$) or $U(\eta,x',x)$ (if $x\lex x'$) is well defined by the above procedure. If $U(\eta,x,y)$ is not defined by this procedure, we set $U(\eta,x,y)=1$.} Then, the random graph $G(\eta)$ with vertex set $\mathcal{P}$ and in which two distinct vertices $x' \lex x$ are connected by an edge if and only if $U(\eta,x,x')\leq g(x'-x)$ has the law of the random connection model with connection function $g$ in $\mathbb{R}^d$.

\subsection{Representation of the exceedances} \label{ssec:exrep}

For appropriate normalizing sequences $b_n,c_n$ and $r\in \rr$, we set $r_n=c_nr+b_n$. In the setting of Theorem \ref{th:MaxEdgeLRPc} we have
\begin{align*}
W(n,r) & = \sum_{x\in V \cap B_n} \sum_{y\in V } \mathbf{1}_{\{ \|x-y\| \mathbf{1}_{\{x\leftrightarrow y\}} >c_n r \}} - \frac12 \sum_{x\in V \cap B_n} \sum_{y\in V \cap B_n} \mathbf{1}_{\{ \|x-y\| \mathbf{1}_{\{x\leftrightarrow y\}} >c_n r \}} \\
    & = \sum_{x\in V \cap B_n} \sum_{y\in V } \mathbf{1}_{\{ \|x-y\| \mathbf{1}_{\{x\leftrightarrow y\}} >c_n r \}}.
\end{align*}
In this setting, for $k,l\in\mathbb{N}$, we denote by $A_{k,l}$ the set of $(x,\mathbf{u}, y,\mathbf{v},\xi)\in \mathbb{R}^d\times \mathbb{M}\times\mathbb{R}^d\times \mathbb{M}\times \mathbf{S}$ such that $x\in D_k\cap B_n$ and $y\in D_l$ satisfying
\begin{align*}
&\xi_{x,y}\left(\left\{(\tilde{y},\mathbf{\tilde{t}}):\, \Vert \tilde{y}-\tilde{x}\Vert\geq r_n,\tilde{y}\lex \tilde{x} \mbox{ and } U(\xi\cup\{(\tilde{x},\mathbf{\tilde{u}})\},\tilde{x},\tilde{y})\leq g(\tilde{x}-\tilde{y})\right\}\right)\\
&\quad +\xi_{x,y}\left(\left\{(\tilde{y},\mathbf{\tilde{t}}):\,\Vert \tilde{y}-\tilde{x}\Vert\geq r_n,\tilde{x}\lex \tilde{y} \mbox{ and } U(\xi\cup\{(\tilde{x},\mathbf{\tilde{u}})\},\tilde{y},\tilde{x})\leq g(\tilde{x}-\tilde{y})\right\}\right) ,
\\&\geq 1. 
\end{align*}
where $\xi_{x,y} := \xi\cup \{(x,\mathbf{u}, y,\mathbf{v})\}$. Then, we formally define $f:\, \mathbf{S}_2\times \mathbf{S}\longrightarrow \{0,1\}$ by
\begin{align*}
f\left(\{(x_1,\mathbf{u}_1), (x_2,\mathbf{u}_2)\},\xi\right)&=\mathbf{1}_{\{(x_1,\mathbf{u}_1,x_2,\mathbf{u}_2,\xi)\in \underset{k\geq 1}{\bigcup}\underset{l\geq 1}{\bigcup} A_{k,l}\}}
\end{align*}
for $(x_1,\mathbf{u}_1), (x_2,\mathbf{u}_2)\in \mathbb{R}^d\times \mathbb{M}$ and $\xi\in\mathbf{S}$. Note that $f$ is measurable by construction and that it is nothing but the indicator function of $\{\Vert y-x\Vert\mathbf{1}_{x\leftrightarrow y}\geq r_n,\,x\in B_n, y\in \rd\}$.

Now, in the setting of Theorem \ref{thcnew}, we have
\begin{align*}
W(n,r) & = \sum_{x\in V \cap B_n} \sum_{y\in V } \mathbf{1}_{\{ \|x-y\| \mathbf{1}_{\{x\leftrightarrow y\}} >c_n r \}} - \frac12 \sum_{x\in V \cap B_n} \sum_{y\in V \cap B_n} \mathbf{1}_{\{ \|x-y\| \mathbf{1}_{\{x\leftrightarrow y\}} >c_n r \}} ,
\end{align*}
which we rewrite as a sum of different indicators as follows, in order to ensure that exceedances are not counted twice:
\begin{align*}
W(n,r) & = \sum_{x\in V \cap B_n} \sum_{y\in V } \mathbf{1}_{\{ y \in \Tilde{B}_n(x) \cup B_n^C, \|x-y\| \mathbf{1}_{\{x\leftrightarrow y\}} >c_n r \}} ,
\end{align*}
where $\Tilde{B}_n(x):= \{ z \in B_n : x \leq z\}$. In this setting, for $k,l\in\mathbb{N}$, we denote by $A_{k,l}$ the set of $(x,\mathbf{u}, y,\mathbf{v},\xi)\in \mathbb{R}^d\times \mathbb{M}\times\mathbb{R}^d\times \mathbb{M}\times \mathbf{S}$ such that $x\in D_k\cap B_n$ and $y\in D_l \cap (\Tilde{B}_n(x) \cup B_n^C)$ satisfying
\begin{align*}
&\xi_{x,y}\left(\left\{(\tilde{y},\mathbf{\tilde{t}}):\, \Vert \tilde{y}-\tilde{x}\Vert\geq r_n,\tilde{y}\lex \tilde{x} \mbox{ and } U(\xi\cup\{(\tilde{x},\mathbf{\tilde{u}})\},\tilde{x},\tilde{y})\leq g(\tilde{x}-\tilde{y})\right\}\right)\\
&\quad +\xi_{x,y}\left(\left\{(\tilde{y},\mathbf{\tilde{t}}):\,\Vert \tilde{y}-\tilde{x}\Vert\geq r_n,\tilde{x}\lex \tilde{y} \mbox{ and } U(\xi\cup\{(\tilde{x},\mathbf{\tilde{u}})\},\tilde{y},\tilde{x})\leq g(\tilde{x}-\tilde{y})\right\}\right) ,
\\&\geq 1,
\end{align*}
where $\xi_{x,y} := \xi\cup \{(x,\mathbf{u}, y,\mathbf{v})\}$. Then, we formally define $f:\, \mathbf{S}_2\times \mathbf{S}\longrightarrow \{0,1\}$ by
\begin{align*}
f\left(\{(x_1,\mathbf{u}_1), (x_2,\mathbf{u}_2)\},\xi\right)&=\mathbf{1}_{\{(x_1,\mathbf{u}_1,x_2,\mathbf{u}_2,\xi)\in \underset{k\geq 1}{\bigcup}\underset{l\geq 1}{\bigcup} A_{k,l}\}}
\end{align*}
for $(x_1,\mathbf{u}_1), (x_2,\mathbf{u}_2)\in \mathbb{R}^d\times \mathbb{M}$ and $\xi\in\mathbf{S}$. Again, note that $f$ is measurable by construction and that it is nothing but the indicator function of $\{\Vert y-x\Vert\mathbf{1}_{x\leftrightarrow y}\geq r_n,\,x\in B_n, y\in \Tilde{B}_n(x) \cup B_n^C\}$.

We are now ready to apply Theorem \ref{Th:Penrose18} with
\begin{align*}
Z=W(n,r)&=F(\eta)=\sum_{(x,\mathbf{u}), (y,\mathbf{v})\in \eta}f\left(\{(x,\mathbf{u}), (y,\mathbf{v})\},\eta\setminus \{(x,\mathbf{u}),(y,\mathbf{v})\}\right)
\end{align*}
the number of exceedances when we select the vertices among the ones that belong to the observation window (without counting any exceedance twice). It remains to define the coupled random variables $U_{x_1,x_2}$ and $V_{x_1,x_2}$ and to control the upper bounds in \eqref{th:Pdtv} and \eqref{th:PdW}.

\subsection{Estimating $p(x_1,x_2)$}

We have $$p(x_1,x_2)=\mathbb{E}_{x_1,x_2}\Big[f\left(\{(x_1,\mathbf{u}_1), (x_2,\mathbf{u}_2)\}, \eta\right)\Big] ,$$ which is the probability that there is an exceedance at $x_1,x_2$ if we add $x_1,x_2$ equipped with independent random marks to the marked Poisson point process $\eta$. By definition
\begin{align}\label{eq:estp(x)}
\begin{split}
p(x_1,x_2)&=\mathbb{E}_{x_1,x_2} \left[\mathbf{1}_{\{x_1\in B_n,\, {x_2\in B^C_{r_n}(x_1)}, \,{x_1\leftrightarrow x_2}\}}\right] \\
&=\mathbf{1}_{\{x_1\in B_n\}} \mathbf{1}_{\{x_2\in B^C_{r_n}(x_1)\}}\mathbb{P}_{x_1,x_2}\left( x_1\leftrightarrow x_2 \right)\\
&=\mathbf{1}_{\{x_1\in B_n\}} \mathbf{1}_{\{x_2\in B^C_{r_n}(x_1)\}} g(x_1-x_2) .
\end{split}
\end{align}

\subsection{Defining $U_{x_1,x_2}$ and $V_{x_1,x_2}$}

Let us add two marked points $\mathbf{x}_1=(x_1, \tau_1)$, $\mathbf{x}_2=(x_2, \tau_2)$ to $\eta$ and consider the associated random connection model $G(\eta \cup \{\mathbf{x}_1, \mathbf{x}_2\} )$. Then we define the subgraph $G(\eta \cup \{\mathbf{x}_1, \mathbf{x}_2\} )_{|\eta}$ induced by the Poisson points in $\eta$ and we observe it has the same distribution as the original random connection model. We define $U_{x_1,x_2}$ as the number of exceedances in the induced graph $G(\eta \cup \{\mathbf{x}_1, \mathbf{x}_2\} )_{|\eta}$:
$$ U_{x_1,x_2}= \sum_{y\in \mathcal{P} \cap B_n} \sum_{z\in \mathcal{P} } \mathbf{1}_{\{ \|z-y\| \mathbf{1}_{\{z\leftrightarrow y\}} >r_n \}} - \frac12 \sum_{y\in \mathcal{P} \cap B_n} \sum_{z\in \mathcal{P} \cap B_n} \mathbf{1}_{\{ \|z-y\| \mathbf{1}_{\{z\leftrightarrow y\}} >r_n \}}.$$
 We also define $V_{x_1,x_2}$ as the number of exceedances in $B_n$ in the enlarged graph $G(\eta \cup \{\mathbf{x}_1, \mathbf{x}_2\} )$ other than the one at $x_1,x_2$ (if there is one), namely
\begin{align*}
 V_{x_1,x_2}&= \sum_{y\in (\mathcal{P} \cup \{x_1, x_2\}) \cap B_n} \sum_{z\in \mathcal{P} \cup \{x_1, x_2\} } \mathbf{1}_{\{ \|z-y\| \mathbf{1}_{\{z\leftrightarrow y\}} >r_n \}} \\
 & \quad - \frac12 \sum_{y\in (\mathcal{P} \cup \{x_1, x_2\}) \cap B_n} \sum_{z\in \mathcal{P} \cup \{x_1, x_2\} \cap B_n} \mathbf{1}_{\{ \|z-y\| \mathbf{1}_{\{z\leftrightarrow y\}} >r_n \}}\\
 & \quad - \mathbf{1}_{\{ \|x_1-x_2\| \mathbf{1}_{\{x_1\leftrightarrow x_2\}} >r_n \}} \Big( \mathbf{1}_{\{x_1 \in B_n\}} + \mathbf{1}_{\{x_2 \in B_n\}}  - \mathbf{1}_{\{x_1 \in B_n\}} \mathbf{1}_{\{x_2 \in B_n\}}\Big) .
\end{align*}

We note that $1+V_{x_1,x_2}$ has the desired conditional distribution and $V_{x_1,x_2}\geq U_{x_1,x_2}$ by construction.

\subsection{Estimating $w(x_1,x_2)$}

We have to provide an upper bound for
\begin{align*}
& w(x_1,x_2)=\mathbb{E}_{x_1,x_2}\left[\vert U_{x_1,x_2}-V_{x_1,x_2}\vert\right]=\mathbb{E}_{x_1,x_2}\left[ V_{x_1,x_2}-U_{x_1,x_2}\right]\\
&\leq \mathbb{E}_{x_1,x_2}\left[ \sum_{y \in B_n \cap \{x_1,x_2\} } \sum_{z \in  \mathcal{P}}  \mathbf{1}_{\{ \|z-y\| \mathbf{1}_{\{y\leftrightarrow z\}} > r_n\}} + \sum_{y \in \mathcal{P} \cap B_n } \sum_{z \in  \{x_1,x_2\}}  \mathbf{1}_{\{ \|z-y\| \mathbf{1}_{\{y\leftrightarrow z\}} > r_n\}}\right].
\end{align*}
It follows using the Mecke formula that

\begin{align} \label{eq:estw(x)} 
\begin{split}
w(x_1,x_2)&\leq \mathbf{1}_{\{x_1 \in B_n\}}\int_{\rd} \mathbb{P}_y\left(\|x_1-y\| \mathbf{1}_{\{y\leftrightarrow x_1\}} > r_n\right)d y \\
& \quad +  \mathbf{1}_{\{x_2 \in B_n\}}\int_{\rd} \mathbb{P}_y\left(\|x_2-y\| \mathbf{1}_{\{y\leftrightarrow x_2\}} > r_n\right)d y \\
&\quad + \int_{B_n} \mathbb{P}_y\left(\|x_1-y\| \mathbf{1}_{\{y\leftrightarrow x_1\}} > r_n\right)d y \\
& \quad + \int_{B_n} \mathbb{P}_y\left(\|x_2-y\| \mathbf{1}_{\{y\leftrightarrow x_2\}} > r_n\right)d y \\
& \leq 4 \int_{B^C_{r_n}(0)}g(y) dy.
\end{split}
\end{align}

\subsection{Concluding}

\noindent \textit{Proof of Proposition \ref{pr:continuousCase}.}
From \eqref{eq:estp(x)} and \eqref{eq:estw(x)}, we obtain the following upper bound for the integrals in the right-hand side of Equations \eqref{th:Pdtv} and \eqref{th:PdW}:

\begin{align*}
&\int_{\mathbb{R}^d} \int_{\mathbb{R}^d}w(x_1,x_2)p(x_1,x_2)dx_2 dx_1
= \int_{B_n} \int_{B^C_{r_n}(x_1)} w(x_1,x_2) g(x_1-x_2) dx_2dx_1 \\
&\leq 4\int_{B^C_{r_n}(0)}g(y)dy\times  \int_{B_n} \int_{B^C_{r_n}(x_1)}  g(x_1-x_2) dx_2dx_1\\
&\leq4 (2n)^d \left(\int_{B^C_{r_n}(0)}g(y)dy\right)^2.
\end{align*}
This along with Theorem \ref{Th:Penrose18} concludes the proof of Proposition \ref{pr:continuousCase}. $\hfill \qed$

\begin{proofof}{Theorem \ref{th:MaxEdgeLRPc}} We detail how to derive Theorem \ref{th:MaxEdgeLRPc} from Proposition \ref{pr:continuousCase} and Lemma \ref{a1}. As in the proof of Theorem \ref{thd1}, an important observation is that for the random variable $W(n,r)$ we have
\begin{align*}
    W(n,r) & = \sum_{x\in V \cap B_n} \sum_{y\in V } \mathbf{1}_{\{ \|x-y\| \mathbf{1}_{\{x\leftrightarrow y\}} >c_n r \}} - \frac12 \sum_{x\in V \cap B_n} \sum_{y\in V \cap B_n} \mathbf{1}_{\{ \|x-y\| \mathbf{1}_{\{x\leftrightarrow y\}} >c_n r \}} \\
    & = \sum_{x\in V \cap B_n} \sum_{y\in V } \mathbf{1}_{\{ \|x-y\| \mathbf{1}_{\{x\leftrightarrow y\}} >c_n r \}} 
\end{align*}
with
$$\mathbb{E} [W(n,r)] = \int_{B_n} \int_\rd g(x-y) dy dx= (2n)^d \int_{B^C_{r_n} (0)} g(z) dz. $$
Thus, by Lemma \ref{a1} the mean number of exceedances in $B_n$, $\beta_{n,r}=\mathbb{E}\left[W(n,r)\right]$, converges to $r^{d-\alpha}$, as $n$ tends to $\infty$. Let $P(n,r)$ and $W(r)$ be Poisson random variables with mean $\beta_{n,r}$ and $r^{d-\alpha}$ respectively. Again we note that, since the total variation distance between two Poisson random variables is bounded by the absolute value of the difference of their parameters, it suffices to check that
\[\d_{\text{TV}}(W(n,r),P(n,r))\longrightarrow 0,\quad n\to \infty,\]
in order to show that $W(n,r)$ converges to $W(r)$ in distribution. From Proposition \ref{pr:continuousCase}, we know that
\[\d_{\text{TV}}(W(n,r),P(n,r))\leq cn^d \left(\int_{B^C_{r_n} (0)} g(z) dz \right)^2\leq\frac{c'}{n^d}(n^d\beta_{n,r})^2,\]
with the notations of Lemma \ref{a1}. From that Lemma, we know the latter quantity vanishes. Hence, $W(n,r)$ converges to $W(r)$ in distribution. Finally, we note that 
$$\P(c_n^{-1}e_n^\star \leq r)=\P(W(n)=0) \to \exp (-r^{d-\alpha}), \quad n \to \infty.$$
Thus, $c_n^{-1}e_n^\star$ converges in distribution to a Fr\'echet random variable with parameter $\alpha-d$.
\end{proofof}

\begin{proofof}{Theorem \ref{thcnew}} 
We give a proof for the Fr\'echet case (F). The proof for the Gumbel case (G) and the Weibull case (W) only exhibits minor changes as compared to the Fr\'echet case when $\alpha > 2$. The proof of \eqref{eq:cvWn} follows exactly as in the proof of Theorem \ref{th:MaxEdgeLRPc}, and it only remains to calculate
$$ \lim_{n \to \infty} \mathbb{E} [W(n,r)],$$
which is more involved in the present case, i.e. the remaining cases {\it (i)} $\alpha =2$ and $r\in (0,\frac12)$ and {\it (ii)} $\alpha >2$ and $r\in (0, \infty)$. Recall that
\begin{align*}
W(n,r)&=  \sum_{x\in V \cap B_n} \sum_{y\in V } \mathbf{1}_{\{ \|x-y\| \mathbf{1}_{\{x\leftrightarrow y\}} >r_n \}} - \frac12 \sum_{x\in V \cap B_n} \sum_{y\in V \cap B_n} \mathbf{1}_{\{ \|x-y\| \mathbf{1}_{\{x\leftrightarrow y\}} >r_n \}},
\end{align*}
so that, using previous calculations and the Mecke formula
\begin{align*}
    \mathbb{E} [W(n,r)] & = 2n \int_{B^C_{r_n} (0)} g(z) dz - \frac12 \int_{B_n} \int_{B_n \cap B^C_{r_n}(x)} g(x-y) dy dx.
\end{align*}
Let us first consider the case $\alpha =2$ and $r\in (0,\frac12)$. Then it follows from Lemma \ref{a11new}
\begin{align*}
    \lim_{n \to \infty} \mathbb{E} [W(n,r)] & = \lim_{n \to \infty}  2n \int_{B^C_{r_n} (0)} g(z) dz - \frac12 \int_{B_n} \int_{B_n \cap B^C_{r_n}(x)} g(x-y) dy dx \\
    & = r^{-1} - \lim_{n \to \infty}\frac12 \int_{B_n} \int_{B_n \cap B^C_{r_n}(x)} \|x-y\|^{-2} dy dx \\
    & = r^{-1} - \frac12 \Big( \frac{1-2r}{r} + 2\log (2r)\Big)  \\
    & = \frac{1+2r}{2r} - \log (2r)
\end{align*}
as desired.

Now we turn to the case $\alpha>2$. Then it follows from Lemma \ref{a111new} and the definition of $r_n$
\begin{align*}
    \lim_{n \to \infty} \mathbb{E} [W(n,r)] & = \lim_{n \to \infty}  2n \int_{B^C_{r_n} (0)} g(z) dz - \frac12 \int_{B_n} \int_{B_n \cap B^C_{r_n}(x)} \|x-y\|^{-\alpha} dy dx \\
    & = 2r^{1-\alpha} - \frac12\lim_{n \to \infty} \int_{B_n} \int_{B_n \cap B^C_{r_n}(x)} g(x-y) dy dx \\
    & = 2r^{1-\alpha} - r^{1-\alpha}  \\
    & = r^{1-\alpha}
\end{align*}
as desired. This concludes the proof.
\end{proofof}

\appendix
\section{Technical lemmas} \label{App}
Given a connection function $g$ and the corresponding sequences $(c_n)_{n\in\N}$ and $(b_n)_{n\in\N}$ of norming constants, we define
$$ \beta_{n,r} = (2n)^d \int_{B^C_{r_n} (0)} g(z) dz $$
for every $n\in\N$, where $r_n = c_nr + b_n$. 

Recall that $\omega_d$ denotes the volume of the $d$-dimensional unit ball.

\begin{lemma}\label{a1}
Assume that $g$ is given by \textnormal{(F)} with parameter $\alpha \in (d, \infty)$. Let $K=K_{d, \alpha}=((\alpha-d)2^{-d}d^{-1}\omega_d^{-1})^\frac{1}{d-\alpha}$. Define $c_n =K n^{\frac{d}{\alpha-d}}$ and $b_n = 0$ for every $n\in\N$. Then, for every $r\geq 0$, it holds that
$$\lim_{n \to \infty} \beta_{n,r} =r^{d-\alpha}.$$
\end{lemma}
\begin{proof}
The assertion easily follows result for integration of spherical functions.
\end{proof}

\begin{lemma}\label{a11new}
Let $c_n=4n$ for every $n\in\N$. Then, for every $r \in (0,\frac12)$, it holds that
$$\lim_{n \to \infty} \int_{-n}^n \int_{[-n,n] \cap B^C_{c_n r}(x)} \|x-y\|^{-2} dy dx =\frac{1-2r}{r} + 2\log (2r).$$
\end{lemma}
\begin{proof}
    Direct calculations give
    \begin{align*}
	& \int_{-n}^n \int_{[-n,n] \cap B^C_{c_n r}(x)} \|x-y\|^{-2} dy dx = \int_{-n}^n \int_{B_n \setminus B_{c_nr}(x)} |y-x|^{-2} dy dx\\
	& = \int_{c_nr-n}^n \int_{c_n r}^{n+x} s^{-2} ds dx+ \int_{-n}^{n-c_nr} \int_{c_nr}^{n-x} s^{-2} ds dx\\
	& = \int_{c_nr-n}^n ((c_nr)^{-1}-(n+x)^{-1}) dx+ \int_{-n}^{n-c_n r} ((c_n r)^{-1}-(n-x)^{-1}) dx\\
	& =  \frac{1-2r}{2r}- \log(2n) + \log (c_n r) + \frac{1-2r}{2r} + \log (c_n r) -  \log(2n)\\
	&=\frac{1-2r}{r} + 2 \log (2r) ,
    \end{align*}
    by definition of $c_n$.
\end{proof}

\begin{lemma}\label{a111new}
Let $\alpha>2$ and define ${c}_n= ((\alpha-1)/2)^\frac{1}{1-\alpha}n^\frac{1}{\alpha-1}$ for every $n\in\N$. Then, for every $r \in (0,\infty)$ and $r_n:=c_n r$, it holds that
$$\lim_{n \to \infty} \int_{-n}^n \int_{[-n,n] \cap B^C_{r_n }(x)} \|x-y\|^{-\alpha} dy dx = 2r^{1-\alpha}.$$
\end{lemma}
\begin{proof}
    Define
    $$I(n):=  \int_{-n}^n \int_{[-n,n] \cap B^C_{r_n }(x)} \|x-y\|^{-\alpha} dy dx.$$
    Direct calculations give
    \begin{align*}
	I(n)=2\int_{0}^n \int_{B_n \setminus B_{r_n}(x)} \|y-x\|^{-\alpha} dy dx = 2I_1 (n) + 2I_2 (n)
	\end{align*}
	with
	\begin{align*}
	I_1(n) & = \int_{n-r_n}^n \int^{x-r_n}_{-n} \|y-x\|^{-\alpha} dy dx=\int_{n-r_n}^n \int^{n+x}_{r_n} s^{-\alpha} ds dx\\
	&=\frac{1}{1-\alpha}\int_{n-r_n}^n (n+x)^{1-\alpha}-{r_n}^{1-\alpha} dx\\
	& = \frac{1}{1-\alpha}  \frac{1}{2-\alpha} \left( (2n)^{2-\alpha} - \left(2n-r_n\right)^{2-\alpha} \right) - \frac{1}{1-\alpha} r_n^{2-\alpha}
	\end{align*}
	and
	\begin{align*}
	I_2(n) & = \int_{0}^{n-r_n} \left(\int^{x-r_n}_{-n} |y-x|^{-\alpha} dy  +  \int_{x+r_n}^{n} |y-x|^{-\alpha} dy\right) dx\\
	& =\int_{0}^{n-r_n} \left(\int^{n+x}_{r_n} s^{-\alpha} ds  +  \int_{r_n}^{n-x} s^{-\alpha} ds\right) dx\\
	& =\frac{1}{1-\alpha}\int_{0}^{n-r_n} \left(\left(n+x\right)^{1-\alpha}+\left(n-x\right)^{1-\alpha}-2r_n^{1-\alpha}\right) dx\\
	& =\frac{1}{1-\alpha}\frac{1}{2-\alpha}\left(\left(2n-r_n\right)^{2-\alpha}-r_n^{2-\alpha}\right)-\frac{2}{1-\alpha}(n-r_n)r_n^{1-\alpha}.
	\end{align*}
	It follows that, since $\alpha>2$,
	\begin{align*}
	I(n)&=\frac{2}{1-\alpha}\left(\frac{1}{2-\alpha}\left(\left(2n\right)^{2-\alpha}-r_n^{2-\alpha}\right)-(2n-r_n)r_n^{1-\alpha}\right)\\
	&\sim \frac{2}{1-\alpha} \Big((2n-r_n)r_n^{1-\alpha} \Big) \\
    & = \frac{r^{1-\alpha}}{n} (2n-r_n) \to 2r^{1-\alpha},
	\end{align*}
    as $n \to \infty$.
\end{proof}

\begin{lemma}\label{a3}
Assume that $g$ is given by \textnormal{(G)} with parameters $\lambda, \alpha \in (0, \infty)$. Let $K=K_{ d, \alpha}= d2^d\omega_d\Gamma(\frac{d}{\alpha})\alpha^{-1}\lambda^{-\frac{d}{\alpha}}$. Define $$b_n=\lambda^{-\frac{1}{\alpha}}\left(d\ln n+\left(\frac{d}{\alpha}-1\right)\ln\left(d\ln n +\ln K\right)+\ln\left(K\Gamma\left(\frac{d}{\alpha}\right)^{-1}\right)\right)^\frac{1}{\alpha}$$ and $c_n =\alpha^{-1}\lambda^{-1}b_n^{1-\alpha}$ for every $n\in\N$. Then, for every $r\in\mathbb{R}$, it holds that
$$\lim_{n \to \infty} \beta_{n,r} =e^{-r}.$$
\end{lemma}
\begin{proof}
By using a change to polar coordinates, one obtains that
\[\beta_{n,r}\sim (2n)^dd\omega_d\int_{r_n}^\infty \exp(-\lambda s^\alpha)s^{d-1}ds.\]
Then, by using the change of variables $t=s^\alpha$, one gets that
\begin{equation}\label{eq:bnG1}\beta_{n,r}\sim Kn^d\mathbb{P}\left( X^\frac{1}{\alpha}>r_n\right),\end{equation}
where $X$ is a random variable with Gamma distribution and parameters $\frac{\alpha}{d}$ and $\lambda$. 

For $x\in\mathbb{R}$, define $\overline{F}(x)=\mathbb{P}(X^\frac{1}{\alpha}>x)$, $\overline{G}(x)=\mathbb{P}(X>x)$ and $a(x)=\frac{x^{1-\alpha}}{\alpha\lambda}$. Observe that $(1+r\frac{a(x)}{x})^\alpha\underset{x\to \infty}{\sim} 1+\alpha r\frac{a(x)}{x}$. Thus, we have:
\begin{align}
 \lim_{x\to\infty}\frac{\overline{F}(x+ra(x))}{\overline{F}(x)}&=\lim_{x\to\infty}\frac{\overline{G}\left((x+ra(x))^\alpha\right)}{\overline{G}(x^\alpha)}=\lim_{x\to\infty}\frac{\overline{G}\left((x^\alpha+\alpha r\frac{a(x)}{x^{1-\alpha}}\right)}{\overline{G}(x^\alpha)}\nonumber\\&=\lim_{x\to\infty}\frac{\overline{G}(x+\frac{r}{\lambda})}{\overline{G}(x)}=e^{-r}.\label{eq:bnG2}  
\end{align}

Since $(b_n)_{n \in\N}$ is chosen such that $\lim_{n\to \infty}Kn^d\mathbb{P}(X>b_n^\alpha)=1$ (see \cite[Table 3.4.4]{EKM}), the claim follows from \eqref{eq:bnG1} and \eqref{eq:bnG2}.
\end{proof}

\begin{lemma}\label{a4}
Assume that $g$ is given by \textnormal{(W)} with parameters $M, \alpha \in (0, \infty)$. Let $K=K_{ d, M,\alpha}=(M^\alpha(\alpha+1)2^{-d}d^{-1}\omega_d^{-1})^\frac{1}{\alpha+1}$. Define $c_n =Kn^{-\frac{d}{\alpha+1}}$ and $b_n = M$ for every $n\in\N$. Then, for every $r\leq 0$, it holds that $\beta_{n,r} \equiv (-r)^{\alpha+1}$.
\end{lemma}
\begin{proof}
This assertion follows from direct calculations.
\end{proof}

\subsection*{Acknowledgement} ES acknowledges financial support from the Austrian Research Association. Large parts of this work were accomplished while ES was visiting the IMB at the University of Dijon. The IMB receives support from the EIPHI Graduate
School (contract ANR-17-EURE-0002).

\bibliographystyle{amsplain}
\bibliography{lit}

\end{document}